\newtheorem{theorem}{Theorem}
\newtheorem{rlt}{Theorem}
\newtheorem{case}{Case}
\newlist{lemmalist}{enumerate}{1}
\setlist[lemmalist]{label=(\roman{lemmalisti}),
	ref=\thelemma:$(\roman{lemmalisti})$,
	noitemsep}
\declaretheorem[
name=Lemma]{lemma}
\Crefname{lemmalist}{Lemma}{Lemmas}
\newtheorem{claim}{Claim}
\newtheorem{problem}{Problem}
\newtheorem{conjecture}{Conjecture}
\newtheorem{corolarry}{Corollary}
\def \no {\noindent}
 \def \sm {\setminus}
 \def \es {\emptyset}
\newenvironment{proof}[1][]%
{\noindent {\setcounter{equation}{0}\it Proof.
}{#1}{}}{\hfill$\Box$\vspace{0ex}}
\newenvironment{proof2}[1][]%
{\noindent{\setcounter{equation}{0}\it Proof.
}{#1}{}}{$\diamond$\vspace{0ex}}
\begin{document}

\title{Coloring ($P_5$, kite)-free graphs}
\author{
Shenwei Huang\thanks{College of Computer Science, Nankai University, Tianjin 300350, China.  E-mail:~shenweihuang@nankai.edu.cn. Supported by the National Natural Science Foundation of China (12171256) and Natural Science Foundation of Tianjin (20JCYBJC01190).}
\and
Yiao Ju\thanks{College of Computer Science, Nankai University, Tianjin 300350, China. E-mail:~2120190414@mail.nankai.edu.cn.}
\and
T. Karthick\thanks{Computer Science Unit, Indian Statistical Institute, Chennai Centre, Chennai 600029, India.
}
}
\date{\today}
\maketitle

\begin{abstract}
Let $P_n$  and $K_n$ denote the induced path and complete graph on $n$ vertices, respectively.
The {\em kite} is the graph obtained from a  $P_4$ by adding a vertex and making it adjacent to all vertices in the $P_4$ except one vertex with degree 1. A graph is ($P_5$, kite)-free if it has no induced subgraph isomorphic to a $P_5$ or a kite. For a graph $G$, the chromatic number of $G$  (denoted by $\chi(G)$) is the minimum number of colors needed to color the vertices of $G$ such that no two adjacent vertices receive the same color,   and the clique number of $G$ is the size of a largest clique in $G$.  Here, we are interested in the class of ($P_5$, kite)-free graphs with small clique number. It is known that  every ($P_5$,~kite, $K_3$)-free graph $G$ satisfies $\chi(G)\leq 3$, every ($P_5$,~kite, $K_4$)-free graph $G$ satisfies $\chi(G)\leq 4$, and that every ($P_5$,~kite, $K_5$)-free graph $G$ satisfies $\chi(G)\leq 6$. In this paper, we showed  the following:
\begin{itemize}\itemsep=0pt
\item Every ($P_5$,~kite, $K_6$)-free graph $G$  satisfies $\chi(G)\leq 7$.
  \item Every ($P_5$,~kite, $K_7$)-free graph $G$ satisfies $\chi(G)\leq 9$.
  \end{itemize}
We also give examples to show that the above bounds are tight.
\end{abstract}

\no{\bf Keywords}: Graph classes; $P_5$-free graphs; Chromatic number; Clique number.

\section{Introduction}
All graphs in this paper are finite and simple. For general graph theory notation which are not defined here we follow~\cite{West}.
Let $P_n$, $C_n$ and $K_n$ denote the path, cycle and complete graph on $n$ vertices, respectively.
We denote the complement graph of $G$ by $\overline{G}$. For two vertex disjoint  graphs $G$ and $H$, we write $G+H$ to denote the \emph{disjoint union} of $G$ and $H$, and $G\vee H$
to denote the graph obtained from $G+H$ by adding an edge between every vertex in $G$ and every vertex in $H$. For a positive integer $r$, we use $rG$ to denote the disjoint union of $r$ copies of $G$. A \emph{hole} is an induced cycle on four or more vertices. An \emph{antihole} is the complement graph of a hole. A hole or antihole is \emph{odd} (\emph{even}) if it has an odd  (even) number of vertices. A \emph{clique} (\emph{stable set}) in a graph $G$ is a set of pairwise adjacent (nonadjacent) vertices in $G$.
We say that a graph $G$ \emph{contains} a graph $H$ if $G$ has an induced subgraph that  is isomorphic to $H$. A graph $G$ is \emph{$H$-free} if it does not contain $H$. For a family $\mathcal{H}$ of graphs, $G$ is \emph{$\mathcal{H}$-free} if $G$ is $H$-free for every $H\in\mathcal{H}$. We write $(H_1,\ldots,H_n)$-free instead of $\{H_1,\ldots,H_n\}$-free.

A \emph{$q$-coloring} of a graph $G$ is an assignment of colors from a set $\{1,2,\ldots,q\}$ to each vertex of $G$ such that adjacent vertices receive different colors. We say that a graph $G$ is \emph{$q$-colorable} if $G$ admits a $q$-coloring. In other words, a graph is $q$-colorable if its vertex-set can be partitioned into $q$ stable sets. The \emph{chromatic number} of a graph $G$, denoted by $\chi(G)$, is the minimum number $q$ for which $G$ is $q$-colorable. Clearly,  a graph $G$ is $q$-colorable if and only if $\chi(G)\leq q$.

  The \emph{clique number} of $G$, denoted by $\omega(G)$, is the size of a largest clique in $G$. A graph $G$ is  $K_t$-free if and only if $\omega(G)\leq t-1$. Clearly,  every graph $G$ satisfies $\chi(G)\geq \omega(G)$. A graph $G$ is \emph{perfect} if $\chi(H)=\omega(H)$ for each induced subgraph $H$ of $G$, and \emph{imperfect} otherwise.

A graph class $\mathcal{G}$ is \emph{hereditary} if $G\in\mathcal{G}$ implies that every induced subgraph of $G$ belongs to $\mathcal{G}$. Obviously, $\mathcal{G}$ is hereditary if and only if $\mathcal{G}$ is the class of $\mathcal{H}$-free graphs for some $\mathcal{H}$. A hereditary graph class $\mathcal{G}$ is \emph{$\chi$-bounded} if there is a function $f:\mathbb{N}\rightarrow \mathbb{N}$ (with $f(1)=1$ and $f(x)\geq x$, for all $x\in \mathbb{N}$)  such that $\chi(G)\leq f(\omega(G))$ for every $G\in\mathcal{G}$. The function $f$ is called a \emph{$\chi$-binding function} for $\mathcal{G}$. Gy\'arf\'as~\cite{Gy87} conjectured that,  if $H$ is any fixed tree, then the class of $H$-free graphs is $\chi$-bounded, and proved that every $P_t$-free graph $G$ satisfies $\chi(G)\leq (t-1)^{\omega(G)-1}$. Since this $\chi$-binding function is exponential in $\omega$, researchers tried to find a linear $\chi$-binding function for the class of $P_t$-free graphs. Unfortunately, the class of  $P_t$-free graphs has no linear $\chi$-binding function when $t\geq 5$~\cite{FGMT95}. However, some linear $\chi$-binding functions for subclasses of $P_t$-free graphs have been found. For example, every $P_4$-free graph $G$ satisfies $\chi(G)=\omega(G)$~\cite{Seinsche}.  If a graph $G$ is   ($P_6, C_4$)-free or ($P_5, P_4\vee K_1$)-free or  ($P_4+K_1, P_4\vee K_1$)-free, then $G$ satisfies $\chi(G)\leq\lceil\frac{5\omega(G)}{4}\rceil$; see \cite{CKMM18, KMa182,KMa18}. Every ($P_6$,~diamond)-free graph $G$ satisfies $\chi(G)\leq \max\{6, \omega(G)\}$~\cite{GHJM}.

We say that a function $f$ is the \emph{optimal} $\chi$-binding function for a $\chi$-bounded class of graphs $\mathcal{G}$ if for every positive integer $t$, there is a graph $G\in \mathcal{G}$ such that $\omega(G)=t$ and $\chi(G)=f(t)$. For example, $f(x)=x$ is the optimal $\chi$-binding function for the class of perfect graphs. Likewise, $f(x)=\lceil\frac{5x}{4}\rceil$ is the optimal $\chi$-binding function for the class of ($P_6, C_4$)-free graphs,  ($P_5, P_4\vee K_1$)-free graphs and ($P_4+K_1, P_4\vee K_1$)-free graphs, since certain graph in such a class can be constructed from a $C_5$ by `blowing up' each vertex into a clique; see \cite{KMa182}. However, in general,   the problem of finding  the optimal $\chi$-binding function  for a given $\chi$-bounded class of graphs  often seems to be difficult. So it is interesting to study such graphs with small clique number in order to obtain the optimal $\chi$-binding function.
It is worth  mention here that the problem of finding the chromatic number of graphs with small clique number have been motivating
research in graph
theory  and has a long history  starting from the `Four color problem'. Recent surveys and literature show that it still receives special attention.       A constructive proof of Mycielski \cite{Mycielski} says  that for any positive integer $k$, there is a graph with chromatic number $k$ and clique number 2.
  We give below few examples  for the  chromatic number of graphs (in particular, graphs defined by some forbidden induced subgraphs) with small clique number. We refer to the recent surveys \cite{RS-survey, ScottSey-Survey} for more details and references.

 \smallskip
\no{\it Chromatic number of graphs with clique number $2$}:   Let $G$ be a graph with $\omega(G)= 2$.
  \begin{itemize}\itemsep=0pt
  \item Brandt \cite{Brandt} showed that, if $G$ is $3K_2$-free, then $G$ is $4$-colorable.
  \item Broersma et al. \cite{BGPS}  showed that, if $G$ is ($P_2+ P_4$)-free, then $G$ is $4$-colorable.
    \item Pyatkin \cite{Pyatkin} showed that, if $G$ is $2P_3$-free, then $G$ is $4$-colorable.
    \item Fan et al.~\cite{FXYY} showed that, if $G$ is fork-free with odd-girth at least $7$, then $G$ is $3$-colorable.
\end{itemize}

 \smallskip
\no{\it Chromatic number of graphs with clique number at most $3$}:   Let $G$ be a graph with $\omega(G)\leq 3$.
  \begin{itemize}\itemsep=0pt
  \item Gaspers and the first author~\cite{GH19} showed that, if $G$ is $2K_2$-free, then $G$ is $4$-colorable.
  \item Gravier, Ho\'ang and Maffray~\cite{GHM} showed that, if  $G$ is  $P_t$-free ($t\geq 4$), then $G$ is $(t -2)^{2}$-colorable.
 \item Esperet et al. \cite{ELMM} showed that, if $G$ is $P_5$-free, then $G$ is $5$-colorable.
   \item Chudnovsky et al. \cite{CSRT-K4-free} showed that, if $G$ is odd-hole-free, then $G$ is $4$-colorable.
  \end{itemize}
In this paper, we are interested in finding the optimal $\chi$-binding function for the class of ($P_5$,~kite)-free graphs.  Here, the \emph{kite} is the graph obtained from a $P_4$ by adding a vertex and making it adjacent to all vertices in the $P_4$ except one vertex with degree 1.
Recently, Brause and Gei{\ss}er~\cite{BG21} showed that every ($P_5$, kite)-free graph $G$ satisfies $\chi(G)\leq 3$ (if $\omega(G)\leq 2$) and $\chi(G)\leq 2\omega(G)-2$ (if $\omega(G)\geq 3$). However, the function $f(x)=2x-2$ for $x\geq 3$ does not seem to be optimal for such class of graphs, and we have the following.

\begin{problem}
What is the optimal $\chi$-binding function for the class of ($P_5$,~kite)-free graphs?
\end{problem}
The above problem seems to be difficult in general, and is open even for a subclass of the class of ($P_5$,~kite)-free graphs, namely the class of ($2K_2$, $K_3+K_1$)-free graphs. So we focussed our attention on the class of ($P_5$,~kite)-free graphs with small clique number.
It follows easily from  the earlier mentioned result of Brause and Gei{\ss}er~\cite{BG21} that   every ($P_5$,~kite, $K_3$)-free graph is $3$-colorable, every ($P_5$,~kite, $K_4$)-free graph is $4$-colorable, and that every ($P_5$,~kite, $K_5$)-free graph is $6$-colorable. 
In this paper, we proved the following.


\begin{theorem}\label{thm:p5k-w5}
Every ($P_5$,~kite, $K_6$)-free graph   is $7$-colorable.
\end{theorem}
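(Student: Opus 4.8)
The plan is to reduce the theorem to the already-established bound for clique number $4$ by peeling off a single stable set. Since the class is hereditary and the statement is immediate when $\omega(G)\le 4$ (then $G$ is $(P_5,\text{kite},K_5)$-free, hence $6$-colorable, hence $7$-colorable), I would assume $\omega(G)=5$ and prove the following more convenient statement: \emph{$G$ has a stable set $S$ meeting every $5$-clique}, equivalently $\omega(G-S)\le 4$. Granting this, $G-S$ is $(P_5,\text{kite},K_5)$-free and hence $6$-colorable, so coloring $S$ with one extra color gives
\[
\chi(G)\le\chi(G-S)+1\le 6+1=7.
\]
Thus the whole theorem rests on finding a single stable transversal of the family of $5$-cliques.

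Before searching for $S$ I would dispose of clique cutsets, arguing by induction on $|V(G)|$. If $C$ is a clique cutset then $|C|\le\omega(G)=5$, so $7$-colorings of the (smaller, and still $(P_5,\text{kite},K_6)$-free) blocks of the decomposition can be permuted to agree on $C$ --- there are $7$ colors and at most $5$ of them to fix --- and they glue to a $7$-coloring of $G$. Hence it suffices to treat \emph{atoms}, i.e. connected graphs with no clique cutset, and in the search for $S$ I may assume $G$ is a connected atom with $\omega(G)=5$.

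For a connected $P_5$-free graph I would then use the classical dichotomy (Bacs\'o--Tuza): $G$ has a dominating clique or a dominating induced $C_5$. In the \emph{dominating $C_5$} case, writing $C=c_1c_2\cdots c_5$ and grouping $V(G)\setminus C$ by neighborhood in $C$, the combination of $P_5$-freeness and kite-freeness should force a rigid picture in which each external vertex is complete to a short arc of consecutive $c_i$'s, so that $G$ is close to the graph obtained by substituting a clique of size $a_i$ for each $c_i$; then $\omega(G)=\max_i(a_i+a_{i+1})=5$ and the $5$-cliques are confined to consecutive pairs of parts. The decisive point is a parity observation: the five consecutive sums $a_i+a_{i+1}$ total $2\sum_i a_i$, which is even and so cannot consist of five $5$'s; hence at least one consecutive pair is ``light'' (sums to at most $4$) and carries no $5$-clique. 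The $5$-cliques therefore sit on at most four consecutive pairs forming a path, and one vertex from each of two suitably placed non-adjacent parts is a stable set meeting all of them. This is exactly the parity phenomenon that lets a single peeled stable set suffice for odd $\omega$, and it is what makes $7=\lfloor 3\cdot 5/2\rfloor$ the correct value.

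The \emph{dominating clique} case is where I expect the real work. Here $D$ is a clique with $|D|\le 5$, and one must control how the vertices outside $D$ attach to $D$ and to one another. Kite-freeness is the main lever: a vertex adjacent to two but not all three vertices of a triangle in $D$ creates a diamond whose two degree-$2$ vertices then forbid any pendant attachment, which sharply limits neighborhoods and the way distinct $5$-cliques can overlap. The hard part will be to convert this purely local control into a \emph{single} stable set meeting all $5$-cliques at once; I expect this to require a further split according to $|D|$ and according to which configurations ($K_5$, diamonds, induced $C_5$'s) are present, and to constitute the bulk of the case analysis. Finally, the tightness examples should be obtained from $C_5$ by substituting cliques so that exactly one consecutive pair is light, which saturates the parity bound and forces $\chi=7$.
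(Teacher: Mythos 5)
Your proposal has two genuine gaps, and one of them is a false structural claim. The picture you describe in the dominating-$C_5$ case is exactly backwards. In a ($P_5$, kite)-free graph, a vertex $u$ outside a dominating induced $C_5=c_1c_2c_3c_4c_5$ \emph{cannot} be attached to a short consecutive arc: if $N(u)\cap C$ is $\{c_1\}$ or $\{c_1,c_2\}$, then $u\,c_1\,c_5\,c_4\,c_3$ is an induced $P_5$, and if $N(u)\cap C=\{c_5,c_1,c_2\}$, then the path $c_5\,c_1\,c_2\,c_3$ together with the apex $u$ is an induced kite. The attachments that actually survive are the non-consecutive ones: $\{c_i,c_{i+2}\}$, $\{c_{i-2},c_i,c_{i+2}\}$, or all of $C$. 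Worse, a clique substitution of $C_5$ is itself not kite-free: taking $a,a'$ in the clique substituted for $c_1$, $b$ in that for $c_2$, $c$ in that for $c_5$ and $d$ in that for $c_3$, the induced path $c\,a\,b\,d$ with apex $a'$ (adjacent to $c,a,b$ but not $d$) is a kite. So $G$ is never ``close to a blow-up of $C_5$'', the parity argument on the sums $a_i+a_{i+1}$ has nothing to apply to, and your proposed tight examples do not even lie in the class; the actual extremal graphs are $\overline{nC_5}$, i.e.\ \emph{joins} of copies of $C_5$, not substitutions.

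The second gap is that the statement your whole proof rests on --- every such $G$ with $\omega(G)=5$ has a stable set meeting all $5$-cliques --- is never established: in the dominating-clique case, which you yourself identify as the bulk of the work, you offer only the expectation that kite-freeness will limit the configurations. As written, the argument proves only the conditional statement ``if such a stable transversal exists, then $\chi(G)\le 7$''. This existence is genuinely delicate in this class: for instance $\overline{3C_5}$ is ($P_5$, kite)-free with $\omega=6$ and $\chi=9$, so it has \emph{no} stable set meeting all its $6$-cliques (such a set would give $\chi\le 1+7=8$); hence transversal arguments cannot be taken for granted here and the odd-$\omega$ case you need would require a real proof. For contrast, the paper sidesteps both issues: it first applies the Brause--Gei{\ss}er reduction (Theorem~\ref{thm:binding}) to pass to ($2K_2$, $K_3+K_1$, $C_5+K_1$, $K_6$)-free graphs, and then produces explicit partitions into seven stable sets by analyzing the structure around an induced $C_5$ (Lemma~\ref{thm:ghasc5}) or, in the $C_5$-free case, around a $\overline{C_9}$ or a $\overline{C_7}\vee K_1$ after a further reduction via Lemma~\ref{lem:omega+2}; no clique-transversal lemma is needed anywhere.
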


\begin{theorem}\label{thm:p5k-w6}
Every ($P_5$,~kite, $K_7$)-free graph   is $9$-colorable.
\end{theorem}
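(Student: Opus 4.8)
The plan is to deduce Theorem~\ref{thm:p5k-w6} from Theorem~\ref{thm:p5k-w5} by peeling off a bipartite layer that destroys all the largest cliques. Let $G$ be ($P_5$,~kite, $K_7$)-free, so $\omega(G)\le 6$. I may assume $\omega(G)=6$, since otherwise $G$ is already ($P_5$,~kite, $K_6$)-free and Theorem~\ref{thm:p5k-w5} gives $\chi(G)\le 7\le 9$. The goal is then a set $S\subseteq V(G)$ such that $G[S]$ is bipartite and $\omega(G\setminus S)\le 5$: granting this, $G\setminus S$ is ($P_5$,~kite, $K_6$)-free, so $\chi(G\setminus S)\le 7$ by Theorem~\ref{thm:p5k-w5}, and hence $\chi(G)\le\chi(G[S])+\chi(G\setminus S)\le 2+7=9$. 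Thus the whole statement reduces to the following structural claim, which I will call the \emph{key lemma}: every ($P_5$,~kite)-free graph $G$ with $\omega(G)=6$ admits a $2$-colorable set $S$ meeting every $6$-clique of $G$.

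Before attacking the key lemma I would make two standard reductions. First, a clique-cutset decomposition together with induction on $|V(G)|$: if $C$ is a clique cutset splitting $G$ into $G_1,G_2$ with $G_1\cap G_2=C$, then $\chi(G)=\max\{\chi(G_1),\chi(G_2)\}$ and each $G_i$ is again ($P_5$,~kite)-free with fewer vertices, so it suffices to treat graphs with no clique cutset. Second, if $G$ is perfect then $\chi(G)=\omega(G)\le 6$ and we are done; hence I may assume $G$ is an imperfect atom. This is the setting in which the two forbidden subgraphs must do their work.

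To build $S$ I would anchor the analysis at a fixed maximum clique $K$ (so $K\cong K_6$) and classify the remaining vertices by their neighbourhood ``trace'' $N(u)\cap K$. Here $P_5$-freeness is the main organising tool: it sharply limits the possible traces and, via the Bacs\'o--Tuza theorem that every connected $P_5$-free graph has a dominating clique or a dominating $P_3$, lets me assume all $6$-cliques cluster near a dominating clique. Kite-freeness is then the lever controlling how two distinct $6$-cliques can overlap: since a kite is exactly a diamond together with a private neighbour at one of its degree-$2$ vertices, kite-freeness forbids the ``sunflower''-type branchings in which maximum cliques split apart, forcing overlapping $6$-cliques to share a large common part. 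I would use this to organise the union of all $6$-cliques into a bounded collection of clique-``blobs,'' and then select $S$ by taking one or two suitably chosen vertices from each blob so that $G[S]$ contains no triangle, i.e.\ is bipartite, while still meeting every $6$-clique.

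The main obstacle is precisely this last step: guaranteeing that the transversal $S$ can be taken $2$-colorable. The difficulty is genuine, because ($P_5$,~kite)-free graphs are truly imperfect and their imperfection is not confined to $C_5$ — one checks that $\overline{C_{2k+1}}$ is $P_5$-free for every $k$, so such graphs may contain arbitrarily large odd antiholes, and their maximum cliques need not pairwise intersect. Consequently no Helly-type shortcut is available, and the argument must instead exploit the interplay of $P_5$- and kite-freeness through a careful case analysis of the adjacencies between the blobs, the anchoring clique $K$, and any induced $C_5$ or antihole present. I expect the bulk of the work, and essentially all of the case checking, to be concentrated here; the parity of $\omega$ should be what dictates the count, with a single stable set sufficing to drop $\omega$ by one when $\omega$ is odd (as in Theorem~\ref{thm:p5k-w5}) but two stable sets, i.e.\ a bipartite $S$, being needed in the even case at hand.
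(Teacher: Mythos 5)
Your reduction step is sound: if $S\subseteq V(G)$ induces a bipartite graph and meets every $K_6$, then $G-S$ is ($P_5$, kite, $K_6$)-free and $\chi(G)\le 2+7=9$ by Theorem~\ref{thm:p5k-w5}. But the proposal stops there. The ``key lemma'' --- that every ($P_5$, kite)-free graph with $\omega=6$ admits a bipartite transversal of all its $6$-cliques --- is never proved, and everything you offer toward it (clique-cutset atoms, perfection, the Bacs\'o--Tuza dominating structure, trace classification with respect to a fixed $K_6$, clique ``blobs'') is a plan rather than an argument; you yourself flag the decisive step (keeping the transversal $2$-colorable while hitting every $6$-clique) as the main obstacle and concede that all the real work would be concentrated there. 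That work is exactly what a proof must supply, and nothing in the proposal supplies it. Note also that the most natural candidate, $S=\{v\}\cup\overline{N}(v)$ for a single vertex $v$, fails in your setting: in a ($P_5$, kite)-free graph, $\overline{N}(v)$ is merely ($P_5$, kite)-free and may contain triangles and induced $C_5$'s, so it need not be bipartite, and it need not meet every $K_6$ either. So as it stands the proposal has a genuine gap: it reduces the theorem to an unproven, and possibly very hard, structural statement that the paper itself never establishes.

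The paper realizes the same ``peel off a bipartite layer'' idea, but makes it trivial by first changing the class. By the Brause--Gei{\ss}er reduction (Theorem~\ref{thm:binding}), any $\chi$-binding function for ($2K_2$, $K_3+K_1$, $C_5+K_1$)-free graphs transfers to ($P_5$, kite)-free graphs, so it suffices to show that every ($2K_2$, $K_3+K_1$, $C_5+K_1$, $K_7$)-free graph is $9$-colorable. In that reduced class, for \emph{any} vertex $v$, observation~\ref{nonnei-bip} gives that $G[\{v\}\cup\overline{N}(v)]$ is bipartite (it is ($2K_2$, $K_3$, $C_5$)-free), and observation~\ref{nei-Kfree} gives that $G[N(v)]$ is $K_6$-free; since Theorem~\ref{thm:main} (the same result underlying Theorem~\ref{thm:p5k-w5}) shows $\chi(G[N(v)])\le 7$, Lemma~\ref{lem:omega+2} yields $\chi(G)\le 9$ immediately. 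In other words, the bipartite layer you are seeking exists for free once you pass to the reduced class; attempting to construct it directly inside the ($P_5$, kite)-free world is precisely where your plan stalls. To repair the argument, invoke Theorem~\ref{thm:binding} first; your layer-peeling then becomes exactly the paper's Lemma~\ref{lem:omega+2}, with no case analysis needed.
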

The proofs of \cref{thm:p5k-w5,thm:p5k-w6} are given in \cref{sec:proof}.  Clearly, \cref{thm:p5k-w5,thm:p5k-w6}  together with aforementioned known results immediately imply the following
theorem.

\begin{theorem}\label{thm:omega6}
Every ($P_5$,~kite)-free graph $G$ with $\omega(G)\leq 6$ satisfies $\chi(G)\leq\lfloor\frac{3\omega(G)}{2}\rfloor$.
\end{theorem}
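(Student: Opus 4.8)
The plan is to treat \cref{thm:omega6} as an immediate corollary obtained by a finite case analysis on the exact value of $\omega(G)$. The key observation is that a ($P_5$, kite)-free graph with $\omega(G)=k$ is precisely a ($P_5$, kite, $K_{k+1}$)-free graph (that contains a $K_k$), so each admissible value $k\in\{1,2,3,4,5,6\}$ falls under exactly one of the coloring bounds already available — the three quoted classical bounds together with the two new results of this paper.

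First I would fix a ($P_5$, kite)-free graph $G$ with $\omega(G)\le 6$ and set $k=\omega(G)$, so that $G$ is $K_{k+1}$-free. The entire argument then reduces to checking that, for each $k$, the sharpest available upper bound on $\chi(G)$ coincides with $\lfloor 3k/2\rfloor$. I would run the six cases in order. When $k=1$ the graph is edgeless, so $\chi(G)\le 1=\lfloor 3/2\rfloor$. For $k=2$, the quoted fact that every ($P_5$, kite, $K_3$)-free graph is $3$-colorable gives $\chi(G)\le 3=\lfloor 6/2\rfloor$; for $k=3$, the ($P_5$, kite, $K_4$)-free bound gives $\chi(G)\le 4=\lfloor 9/2\rfloor$; and for $k=4$ the ($P_5$, kite, $K_5$)-free bound gives $\chi(G)\le 6=\lfloor 12/2\rfloor$. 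The two remaining cases invoke the new results: for $k=5$, \cref{thm:p5k-w5} gives $\chi(G)\le 7=\lfloor 15/2\rfloor$, and for $k=6$, \cref{thm:p5k-w6} gives $\chi(G)\le 9=\lfloor 18/2\rfloor$. In every case the resulting bound equals $\lfloor 3\omega(G)/2\rfloor$, which is what we want.

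There is no genuine obstacle in this derivation; all of the real difficulty is concentrated in \cref{thm:p5k-w5,thm:p5k-w6} themselves, and once those are in hand \cref{thm:omega6} is pure bookkeeping. The only point demanding a little care is that one must apply the bound matching the \emph{exact} clique number: since a graph with $\omega(G)=k$ is $K_{k+1}$-free but is also $K_{k+2}$-free, using a coarser result (for instance applying \cref{thm:p5k-w6} in the case $k=5$) would yield the weaker estimate $\chi(G)\le 9$ instead of the desired $\chi(G)\le 7$. Selecting, for each $k$, the tightest applicable result — i.e.\ the one whose forbidden complete graph is exactly $K_{k+1}$ — is precisely what makes the value $\lfloor 3\omega(G)/2\rfloor$ emerge uniformly across all cases.
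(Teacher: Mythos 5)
Your proposal is correct and is essentially the paper's own proof: the paper derives \cref{thm:omega6} exactly by combining the quoted bounds for the $K_3$-, $K_4$-, and $K_5$-free cases with \cref{thm:p5k-w5,thm:p5k-w6}, which is precisely your case analysis on $\omega(G)\in\{1,\ldots,6\}$. The arithmetic check that each bound equals $\lfloor 3\omega(G)/2\rfloor$ is carried out correctly in every case.
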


In view of \cref{thm:omega6},  we have the following.

\begin{conjecture}\label{con:bound}
Every ($P_5$, kite)-free graph $G$ satisfies $\chi(G)\leq \lfloor\frac{3\omega(G)}{2}\rfloor$.
\end{conjecture}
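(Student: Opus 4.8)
The plan is to prove \cref{con:bound} by induction on $|V(G)|$, exploiting two operations under which the class of $(P_5,\text{kite})$-free graphs is closed and under which the target bound $\lfloor 3\omega/2\rfloor$ is preserved, so that only a restricted ``prime'' case remains. The first is the \emph{clique cutset}: if a clique $K=G_1\cap G_2$ separates $G$ into $G_1,G_2$, then $\chi(G)=\max\{\chi(G_1),\chi(G_2)\}$ and $\omega(G_i)\le\omega(G)$, so the bound is inherited. The second is the \emph{join}: both $P_5$ and the kite have connected complements, so neither can be split across the two sides of a join; hence $G_1\vee G_2$ is $(P_5,\text{kite})$-free whenever $G_1,G_2$ are, and moreover $\chi(G_1\vee G_2)=\chi(G_1)+\chi(G_2)$ and $\omega(G_1\vee G_2)=\omega(G_1)+\omega(G_2)$. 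Since $\lfloor x\rfloor+\lfloor y\rfloor\le\lfloor x+y\rfloor$, the bound survives joins too. After also splitting off connected components, I may therefore assume $G$ is connected, co-connected (join-indecomposable), and has no clique cutset.

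These reductions are guided by the extremal examples, which I expect to be exactly the joins of copies of $C_5$: the join of $k$ copies of $C_5$ has $\omega=2k$ and $\chi=3k$, and joining one further $K_1$ gives $\omega=2k+1$, $\chi=3k+1$; both realize $\lfloor 3\omega/2\rfloor$ and are $(P_5,\text{kite})$-free by the join-closure above. Under the two reductions these tight graphs collapse to their co-components, namely single copies of $C_5$ (where $\chi=3=\lfloor 3\cdot 2/2\rfloor$) and perfect pieces (where $\chi=\omega$). Thus the $3/2$ ratio of the conjecture should arise only from gluing many small imperfect pieces by joins, and all the real difficulty is concentrated in the connected, co-connected atoms.

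For such an atom I would invoke the structure of connected $P_5$-free graphs: each has a dominating clique or a dominating induced $C_5$ (Bacs\'o and Tuza). In the dominating-clique case I would use kite-freeness to control the attachment of the remaining vertices and argue that $G$ is in fact perfect, so $\chi(G)=\omega(G)\le\lfloor 3\omega(G)/2\rfloor$. In the dominating-$C_5$ case I would classify every other vertex by its neighbourhood on the $C_5$; kite-freeness should forbid all ``kite-creating'' attachment patterns and force $G$ to be a clique blow-up of $C_5$ (together with perfectly attached parts). A clique blow-up of $C_5$ with part sizes $a_1,\dots,a_5$ is easily seen to satisfy $\chi\le\lceil 5\omega/4\rceil\le\lfloor 3\omega/2\rfloor$, comfortably inside the bound. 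Combining the perfect case, the blow-up case, and the two reductions by induction would then yield \cref{con:bound}.

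The main obstacle is precisely the structural step for co-connected atoms at \emph{arbitrary} clique number. For $\omega(G)\le 6$ this is exactly what \cref{thm:p5k-w5,thm:p5k-w6} (together with the earlier results for $\omega\le 4$) deliver, and \cref{thm:omega6} is their immediate common consequence; but those arguments rest on finite, $K_t$-specific case analysis that does not obviously uniformize. I expect the delicate points to be twofold: first, ruling out (or folding into the perfect/blow-up description) any exotic co-connected imperfect atoms beyond clique blow-ups of $C_5$, such as long odd antiholes and their kite-free relatives; and second, upgrading a \emph{dominating} $C_5$ to a \emph{global} $C_5$-blow-up rather than a merely local one. Carrying out this decomposition for all $\omega$ at once, without an exponential proliferation of cases, is the crux that keeps \cref{con:bound} open.
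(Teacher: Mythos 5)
The statement you are trying to prove is stated in the paper as \cref{con:bound}, an \emph{open conjecture}: the paper proves it only for $\omega(G)\le 6$ (via \cref{thm:p5k-w5,thm:p5k-w6} and the earlier results, summarized in \cref{thm:omega6}) and offers no proof in general. Your proposal likewise is not a proof, and you concede as much in your final paragraph; so the only question is whether your reduction framework plus the ``atom'' analysis could plausibly close the gap, and there the central structural claim you lean on is false. A connected, co-connected $(P_5,\text{kite})$-free graph with no clique cutset need not be perfect or a clique blow-up of $C_5$: the odd antiholes $\overline{C_7}$ and $\overline{C_9}$ are $(P_5,\text{kite})$-free (their complements $C_7$, $C_9$ contain neither $\overline{P_5}$ nor the complement of the kite, both of which contain cycles of length at most $5$ or a vertex of degree $3$), they are co-connected, their connectivity exceeds their clique number so no clique cutset exists, they are imperfect by \cref{thm:SPGT}, and they are not blow-ups of $C_5$. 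These are precisely the graphs the paper must fight in \cref{thm:c9bar,thm:c7bar}. Your dominating-clique subcase is broken by the same example: every triangle of $\overline{C_7}$ is dominating, yet $\overline{C_7}$ is imperfect, so ``dominating clique $\Rightarrow$ perfect'' cannot be the argument. Similarly, a dominating induced $C_5$ does not force a global blow-up structure; the paper's \cref{thm:ghasc5} shows how much residual structure (the sets $B_i$ and $D$, which can themselves contain $C_5$'s, $\overline{C_6}$'s, and triangles) survives after fixing a $C_5$ and a maximal blow-up $A$ around it.

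Your reductions themselves are sound and worth keeping: the class is closed under joins (both forbidden graphs have connected complements), $\chi$ and $\omega$ are additive over joins, $\lfloor 3\omega_1/2\rfloor+\lfloor 3\omega_2/2\rfloor\le\lfloor 3(\omega_1+\omega_2)/2\rfloor$, and clique cutsets preserve the bound; this correctly explains why the extremal examples $\overline{nC_5}$ and $\overline{nC_5}\vee K_1$ are consistent with the conjecture. But after these reductions the remaining atoms include imperfect graphs beyond $C_5$-blow-ups, and handling them is exactly the unresolved content of \cref{con:bound}. The paper's own strategy for small $\omega$ is different in kind: it reduces (via \cref{thm:binding} and \cref{lem:omega+2}) to $(2K_2,K_3+K_1,C_5+K_1,K_6)$-free graphs and then performs an explicit, clique-number-specific case analysis organized around whether the graph contains a $C_5$, a $\overline{C_9}$, or a $\overline{C_7}\vee K_1$ --- precisely the exotic atoms your dichotomy omits. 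Until you have a structural theorem that accounts for these (for arbitrary $\omega$), your outline cannot be completed.
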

If the conjecture is true, then the function $f(x)=\lfloor\frac{3x}{2}\rfloor$ is the optimal $\chi$-binding function for the class of ($P_5$,~kite)-free graphs $\cal G$, since given $n\in \mathbb N$, for $\omega(G)=2n$, we have $G=\overline{nC_5}$, and for $\omega(G)=2n+1$, we have $G=\overline{nC_5}\vee K_1$ so that $G\in \cal G$ and that $\chi(G)=\lfloor\frac{3\omega(G)}{2}\rfloor$.  We also note that the conjecture is true for a subclass of the class of ($P_5$, kite)-free graphs, namely the class of ($3K_1$, kite)-free graphs~\cite{CKS-2}.

\smallskip
Moreover, as a corollary of \cref{thm:omega6}, we have the following,    which is a slight improvement to the bound given by Brause and Gei{\ss}er~\cite{BG21}.

\begin{corolarry}\label{thm:2omega-3}
Every  ($P_5$,~kite)-free graph $G$ with $\omega(G)\geq 5$ satisfies $\chi(G)\leq 2\omega(G)-3$.
\end{corolarry}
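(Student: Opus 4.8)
The plan is to derive \cref{thm:2omega-3} as a straightforward consequence of \cref{thm:omega6} together with the previously established result of Brause and Gei{\ss}er~\cite{BG21}, splitting the argument according to the value of $\omega(G)$. Since the claimed bound $2\omega(G)-3$ is required only for $\omega(G)\geq 5$, I would treat the small cases covered by \cref{thm:omega6} separately from the large cases, where the Brause--Gei{\ss}er bound already suffices.

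First I would handle the range $5\leq\omega(G)\leq 6$. For these two values, \cref{thm:omega6} gives $\chi(G)\leq\lfloor\frac{3\omega(G)}{2}\rfloor$, so it suffices to check the numerical inequality $\lfloor\frac{3\omega(G)}{2}\rfloor\leq 2\omega(G)-3$ at $\omega(G)=5$ and $\omega(G)=6$. At $\omega(G)=5$ the left side is $\lfloor 15/2\rfloor=7$ and the right side is $7$, so equality holds; at $\omega(G)=6$ the left side is $9$ and the right side is $9$, so again equality holds. Thus the corollary's bound is met (in fact with equality) in both of these cases, which explains the choice of the coefficient in the statement.

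Next I would deal with $\omega(G)\geq 7$, where \cref{thm:omega6} no longer applies. Here I invoke the Brause--Gei{\ss}er bound $\chi(G)\leq 2\omega(G)-2$ valid for all $(P_5,\text{kite})$-free graphs with $\omega(G)\geq 3$, and I only need to upgrade the additive constant from $-2$ to $-3$. The cleanest way is an induction or a direct structural argument, but most likely the intended route is simply to observe that the $-2$ bound, combined with the tightness only occurring at specific blow-ups of $C_5$, can be sharpened by one; alternatively, one verifies $2\omega(G)-2\leq 2\omega(G)-3$ fails, so a genuinely separate argument is needed for large $\omega$. I would therefore reexamine the proof technique behind the $2\omega(G)-2$ bound and argue that the extra color is saved uniformly once $\omega\geq 7$, for instance by peeling off a single clique-neighborhood or a $C_5$-structure and applying induction on the remainder.

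The main obstacle I anticipate is precisely this last step: \cref{thm:omega6} gives no information for $\omega(G)\geq 7$, and the bare Brause--Gei{\ss}er bound gives $2\omega(G)-2$, which is one more than the target $2\omega(G)-3$. So the corollary is \emph{not} a formal consequence of the two cited results by simple arithmetic alone, and some additional work is required to shave off the final unit for large clique number. I expect the resolution to come from combining a base case (supplied by \cref{thm:omega6} at $\omega=6$, giving $\chi\leq 9=2\cdot 6-3$) with an inductive step that adds at most $2$ to the chromatic number each time $\omega$ increases by $1$; chaining these yields $\chi(G)\leq 9+2(\omega(G)-6)=2\omega(G)-3$ for all $\omega(G)\geq 6$, and in particular for $\omega(G)\geq 5$. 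Verifying that such an inductive increment of exactly $2$ holds for $(P_5,\text{kite})$-free graphs is the crux, and I would look to the structural decomposition used in the proofs of \cref{thm:p5k-w5,thm:p5k-w6} to justify it.
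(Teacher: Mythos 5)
You correctly identify both the arithmetic ($\lfloor 3\omega/2\rfloor = 2\omega-3$ at $\omega=5,6$) and the key fact that the corollary is \emph{not} a formal consequence of \cref{thm:omega6} plus the Brause--Gei{\ss}er bound $2\omega-2$ alone; your proposed resolution --- a base case plus an inductive step adding at most $2$ colors per unit increase of the clique number, giving $7+2(\omega-5)=2\omega-3$ --- is exactly the shape of the paper's argument. The gap is that you leave this increment as an expectation rather than proving it, and, as you formulate it (``an inductive increment of exactly $2$ holds for ($P_5$,~kite)-free graphs''), it is not directly available: in a ($P_5$,~kite)-free graph the nonneighborhood $\overline{N}(v)$ of a vertex need not induce a bipartite graph (the graph $K_3+K_1$ is itself ($P_5$,~kite)-free), so peeling off $\{v\}\cup\overline{N}(v)$ for two fresh colors fails if the induction is run on the class of ($P_5$,~kite)-free graphs themselves.

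The paper's mechanism, which fills precisely this hole, is the combination of \cref{thm:binding}, observation \ref{nonnei-bip}, and \cref{lem:omega+2}. \cref{thm:binding} reduces the whole problem to the class of ($2K_2$, $K_3+K_1$, $C_5+K_1$)-free graphs; this class is hereditary, and by \ref{nonnei-bip} every vertex $v$ in such a graph has $G[\overline{N}(v)\cup\{v\}]$ bipartite. \cref{lem:omega+2} then delivers exactly the $+2$ increment: color $G[N(v)]$ with $q$ colors by induction (its clique number is one smaller, by \ref{nei-Kfree}), and color $G[\overline{N}(v)\cup\{v\}]$ with two fresh colors. The base case is \cref{thm:main}: every ($2K_2$, $K_3+K_1$, $C_5+K_1$, $K_6$)-free graph is $7$-colorable. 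Chaining shows that clique number at most $5+k$ forces $(7+2k)$-colorability in the reduced class, i.e.\ $\chi\leq 2\omega-3$ for $\omega\geq 5$, and \cref{thm:binding} transfers this $\chi$-binding function back to ($P_5$,~kite)-free graphs. So your plan is salvageable, but only after inserting the reduction to the ($2K_2$, $K_3+K_1$, $C_5+K_1$)-free class; the ``structural decomposition'' you intended to borrow from the proofs of \cref{thm:p5k-w5,thm:p5k-w6} is exactly this reduction-plus-increment machinery, not anything internal to those proofs.
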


We finish this section with some more notation and preliminaries used in this paper.   For a positive integer $k$, if an index (say) $i\in \{1,2,\ldots, k\}$ with $i$ modulo $k$, then we simply write $i\in \langle k \rangle$.
Let $G$ be a graph with vertex-set $V(G)$ and edge-set $E(G)$. We say a vertex $u$ is a \emph{neighbor} (\emph{nonneighbor}) of another vertex $v$ in $G$, if $u$ and $v$ are adjacent (nonadjacent). The set of neighbors of a vertex $v$ in $G$ is denoted by $N_G(v)$, and we often write $N(v)$ if the context is clear, and we write $\overline{N}(v)$ to denote the set $V(G)\sm (N(v)\cup \{v\})$.
 For $S\subseteq V(G)$, let $G[S]$ denote the subgraph of $G$ induced by $S$, and we say that $S$ induces a graph $H$ if $G[S]$ is isomorphic to $H$.

 Let $S$ and $T$ be  any two subsets
of $V$. We write $[S,T]$ to denote the set of edges that has
one end in $S$ and other end in $T$. We say that $S$ is
\emph{complete} to $T$ or $[S,T]$ is complete if every vertex in $T$
is adjacent to every vertex in $T$; and $S$ is \emph{anticomplete} to
$T$ if $[S,T]=\emptyset$.  If $S$ is singleton, say $\{v\}$, we simply
write $v$ is complete (anticomplete) to $T$ instead of writing $\{v\}$
is complete (anticomplete) to $T$.
We say that $X$ \emph{dominates} $Y$ if every vertex in $Y$ has a neighbor in $X$,  and we say that a graph $H$ is {\em dominating} in $G$ if $V(H)$ dominates $V(G)\sm V(H)$.

\medskip We also need the following  from the literature to prove our  results.

\begin{rlt}[\cite{GH19}]\label{thm:2k2-k4}
Every ($2K_2$, $K_4$)-free graph is 4-colorable.
\end{rlt}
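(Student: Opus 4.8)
The plan is to reduce to the connected case and then exploit the strong structure that $2K_2$-freeness forces around a dominating clique. Since $\chi$ of a graph is the maximum of $\chi$ over its connected components, it suffices to treat a connected $(2K_2,K_4)$-free graph $G$. The first key step is a structural lemma: \emph{every connected $2K_2$-free graph has a dominating clique}. I would prove this by choosing a clique $K$ that maximizes $|N[K]|$ and arguing that if some vertex is undominated, then by connectivity there is an edge $dz$ with $d\notin N[K]$ and $z\in N(K)\setminus K$; applying $2K_2$-freeness to $dz$ together with the edges of $K$ forces $z$ to be complete to $K$, so $K\cup\{z\}$ is a clique dominating strictly more vertices, contradicting the choice of $K$. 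Since $G$ is $K_4$-free, the resulting dominating clique $K=\{k_1,\dots,k_t\}$ satisfies $t\le 3$.

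The second ingredient is the sublemma that \emph{every triangle-free $2K_2$-free graph is $3$-colorable}. For this I would note that such a graph $H$, if non-bipartite, has a shortest odd cycle which is induced; being triangle-free it has length $\ge 5$, and since an induced $C_n$ with $n\ge 7$ contains an induced $2K_2$, this cycle must be a $C_5$. A short $2K_2$-argument then shows that this $C_5$ is dominating in $H$, after which one checks that its natural $3$-coloring extends to all of $H$ by classifying the remaining vertices according to their (necessarily independent) neighborhoods on the $C_5$. Returning to $G$ with its dominating clique $K$, I would color $K$ with colors $1,\dots,t$ and classify each remaining vertex $v$ by the set $f(v)$ of its neighbors in $K$, which is a nonempty subset of $K$ of size $1$ or $2$ (size $3$ is impossible, as it would create a $K_4$). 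The ``mixed'' sets $X_{k_ik_j}=\{v:f(v)=\{k_i,k_j\}\}$ are stable sets, since two adjacent such vertices together with $k_i,k_j$ would form a $K_4$; the ``private'' sets $X_{k_i}=\{v:f(v)=\{k_i\}\}$ induce triangle-free $2K_2$-free graphs and are thus $3$-colorable by the sublemma.

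This reduces the theorem to a list-coloring statement on $G-K$: assign to each $v$ the list $L(v)=\{1,2,3,4\}\setminus\{\text{colors of }f(v)\}$, so that private vertices get lists of size $3$ and mixed vertices lists of size $2$, and it remains to produce a proper coloring of $G-K$ from these lists. The case $t=1$ is immediate, since then the unique clique vertex is complete to $G-K$, which is therefore triangle-free and $2K_2$-free and hence $3$-colorable, giving $\chi(G)\le 4$. I expect the main obstacle to be the cases $t=2$ and especially $t=3$, where one must control the adjacencies \emph{between} the various parts $X_{k_i}$ and $X_{k_ik_j}$ — precisely the relations constrained by $2K_2$-freeness — and show that the size-$2$ and size-$3$ lists above always admit a consistent proper coloring. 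The heart of the argument is thus a careful analysis of how the private and mixed neighborhoods of a dominating clique interact (for example, using that two independent edges, one inside a private part and one inside another, must be joined by a cross edge), verifying in each configuration that four colors suffice. The example $\overline{C_7}$, which is $(2K_2,K_4)$-free with $\chi=4$, shows that no fewer colors can be expected and that the analysis cannot be loosened.
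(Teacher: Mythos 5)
Your first key lemma is false, and the counterexample is the smallest possible one: $C_5$ is connected and $2K_2$-free, but its only cliques are vertices and edges, and no edge of $C_5$ dominates all five vertices. The flaw sits exactly where you claim that $2K_2$-freeness ``forces $z$ to be complete to $K$'': applying $2K_2$-freeness to the edge $dz$ and an edge $k_ik_j$ of $K$ (with $d$ anticomplete to $K$) only forces $z$ to be adjacent to $k_i$ \emph{or} $k_j$. So $z$ may miss one vertex of $K$, in which case $K\cup\{z\}$ is not a clique, and exchanging the missed vertex for $z$ can lose previously dominated vertices, so maximality of $|N[K]|$ yields no contradiction. The correct statement in this vein (Cozzens--Kelleher) is that a connected $2K_2$-free graph has a dominating clique \emph{or} a dominating $C_5$ (equivalently, for the larger class of $P_5$-free graphs, a dominating clique or a dominating $P_3$), and the dominating-$C_5$ alternative is precisely the configuration your clique-based framework cannot see; any proof along these lines must treat it by a separate, genuinely different argument.

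Even granting a dominating clique, the proposal is not a proof. After the (correct) observations that each private set $X_{k_i}$ is triangle-free and $2K_2$-free, hence $3$-colorable, and each mixed set $X_{k_ik_j}$ is stable, you explicitly defer ``the heart of the argument'' --- showing that the size-$2$ and size-$3$ lists admit a simultaneous proper coloring of $G-K$ when $t=2,3$ --- and that is where essentially all of the difficulty of this theorem is concentrated; componentwise colorability with those list sizes implies nothing by itself. It is worth knowing that the paper you are working from does not prove this statement at all: it quotes it as Theorem~A from Gaspers and Huang \cite{GH19}, whose proof occupies a roughly 26-page journal article built on an intricate structural and case analysis. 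So your sketch both rests on a false structural lemma and omits the portion of the argument that accounts for virtually the entire length of the only known proof.
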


The structure of perfect graphs is characterized by the following Strong Perfect Graph Theorem~\cite{CRST06}:

\begin{rlt}[\cite{CRST06}]\label{thm:SPGT}
A graph is perfect if and only if it does not contain an odd hole or an odd antihole.
\end{rlt}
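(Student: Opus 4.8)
The statement is the Strong Perfect Graph Theorem, so an honest proof proposal is an outline of the Chudnovsky--Robertson--Seymour--Thomas program rather than a pretense that a short argument exists. One direction is immediate and purely hereditary: an odd hole $C_{2k+1}$ with $k\ge 2$ has $\chi=3>2=\omega$, and an odd antihole $\overline{C_{2k+1}}$ has $\chi=k+1>k=\omega$, so any graph containing one of these as an induced subgraph fails the defining equality on that subgraph and is imperfect. Hence all the content lies in the converse: every \emph{Berge} graph (no odd hole, no odd antihole) is perfect. The plan is to take a counterexample $G$ of minimum order --- a Berge graph that is imperfect, all of whose proper induced subgraphs are perfect --- and derive a contradiction.

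The first and main pillar is a structure theorem: every Berge graph $G$ is either \emph{basic} or admits one of a short list of decompositions. The basic classes are the bipartite graphs, the line graphs of bipartite graphs, the complements of these two, and the double split graphs; the decompositions in the final form are a proper $2$-join of $G$, a proper $2$-join of $\overline{G}$, and a balanced skew partition (earlier versions also invoked homogeneous pairs, later absorbed). I would prove this by a long case analysis organized around the configurations a Berge graph may contain --- notably prisms, wheels, and appearances of small line graphs such as $L(K_{3,3})$ --- showing that if none of the listed decompositions is available, then $G$ is forced into one of the five basic types.

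The second pillar consists of decomposition lemmas, most of which predate the full theorem. Basic graphs are perfect: bipartite graphs and line graphs of bipartite graphs are perfect by K\"onig's theorems, their complements by the Lov\'asz Perfect Graph Theorem (perfection is closed under complementation), and double split graphs by direct checking. For the decompositions one uses that no minimal imperfect graph admits a $2$-join (Cornu\'ejols--Cunningham), none admits a proper homogeneous pair (Chv\'atal--Sbihi), and --- the decisive new ingredient of \cite{CRST06} --- none admits a balanced skew partition. Applying these to $G$: it cannot be basic, so by the structure theorem it admits one of the listed decompositions; but each is forbidden in a minimal imperfect graph, a contradiction. Hence no counterexample exists.

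The hardest part is unquestionably the structure theorem together with the balanced-skew-partition lemma. Chv\'atal had long conjectured that no minimal imperfect graph has a skew partition, and establishing the \emph{balanced} version that actually survives the structural analysis requires delicate control of how colorings and maximum stable sets interact across the partition. The structure theorem itself demands an exhaustive, technically heavy dissection of Berge graphs --- tracking even pairs, attaching holes and prisms carefully, and ruling out, configuration by configuration, the case where no decomposition applies. I would expect essentially all of the genuine difficulty and novelty to be concentrated there.
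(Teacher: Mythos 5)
This statement is not proved in the paper at all: it is imported as Theorem~\ref{thm:SPGT}, cited from \cite{CRST06}, and used purely as a black box in the proofs of Lemmas~\ref{thm:c7bar} and the later colorability arguments. Your outline is a faithful (and accurate) summary of the actual Chudnovsky--Robertson--Seymour--Thomas proof that the citation points to --- the easy hereditary direction, the structure theorem for Berge graphs, and the decomposition lemmas including the balanced skew partition --- so it takes the same route as the source the paper defers to, and no discrepancy arises.
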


The $\chi$-binding function for ($P_5$, kite)-free graphs can be reduced to the $\chi$-binding function for ($2K_2$, $K_3$+$K_1$, $C_5$+$K_1$)-free graphs~\cite{BG21}:

\begin{rlt}[\cite{BG21}]\label{thm:binding}
If a function $f$ is a $\chi$-binding function for the class of ($2K_2$, $K_3+K_1$, $C_5+K_1$)-free graphs, then it is also a $\chi$-binding function for the class of ($P_5$, kite)-free graphs.
\end{rlt}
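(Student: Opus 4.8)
The plan is to recast the statement as a structural reduction via \emph{clique cutsets}, so that a coloring bound on the smaller class lifts automatically. Two harmless normalizations come first. Since a $\chi$-binding function may be replaced by its nondecreasing envelope $x\mapsto\max_{y\le x}f(y)$ without weakening its guarantee on the subclass, I assume $f$ is nondecreasing. Next, because $P_5$ and the kite are connected, the disjoint union of ($P_5$, kite)-free graphs is again ($P_5$, kite)-free, and both $\chi$ and $\omega$ are the maxima over components; with $f$ nondecreasing it therefore suffices to bound $\chi(G)$ for a \emph{connected} ($P_5$, kite)-free graph $G$. I then invoke the classical clique-cutset decomposition: if a clique $C$ separates $G$ into $G_1,G_2$ with $G_1\cap G_2=C$, then $\chi(G)=\max\{\chi(G_1),\chi(G_2)\}$ (recolor so the two colorings agree on the clique $C$) and $\omega(G)=\max\{\omega(G_1),\omega(G_2)\}$ (no clique meets both $V(G_1)\setminus C$ and $V(G_2)\setminus C$), while $G_1,G_2$ stay ($P_5$, kite)-free as induced subgraphs. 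Iterating, $\chi(G)=\max_i\chi(A_i)$ and $\omega(G)=\max_i\omega(A_i)$ over the \emph{atoms} $A_i$ (the pieces with no clique cutset).

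Granting the above, the whole theorem reduces to one structural lemma, which is also the crux of the argument:

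\medskip
\noindent\emph{Key Lemma.} Every connected ($P_5$, kite)-free graph with no clique cutset is ($2K_2$, $K_3+K_1$, $C_5+K_1$)-free.

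\medskip
\noindent Indeed, granting this, each atom $A_i$ lies in the smaller class, so $\chi(A_i)\le f(\omega(A_i))\le f(\omega(G))$ by hypothesis and monotonicity, whence $\chi(G)=\max_i\chi(A_i)\le f(\omega(G))$, which is exactly the assertion that $f$ is a $\chi$-binding function for ($P_5$, kite)-free graphs.

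To prove the Key Lemma I would argue the contrapositive and split into three cases according to which of $2K_2$, $K_3+K_1$, $C_5+K_1$ occurs as an induced subgraph, in each case exhibiting a clique cutset. The common mechanism is that kite-freeness rigidly controls attachments to diamonds — no vertex can be adjacent to exactly one of the two degree-$2$ vertices of an induced diamond, since that would complete a kite — which is what forces candidate separators to be cliques or to be complete to one side; meanwhile $P_5$-freeness keeps the graph local, bounding the distance from the forbidden configuration $F$ (the triangle, the independent edges, or the $C_5$) to the far vertex $w$ that is anticomplete to it, because a shortest (hence induced) $F$–$w$ path of five vertices would itself be a $P_5$. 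For the $K_3+K_1$ case, with triangle $T=\{x,y,z\}$ and $w$ anticomplete to $T$, the $P_5$-free distance bound plus these attachment constraints should let one peel off the component of $w$ in $G$ minus an appropriate neighborhood of $T$ and read a clique separator off its boundary; the $2K_2$ case is handled by the analogous analysis around the two independent edges.

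The hard part will be identifying the \emph{correct} separator and proving simultaneously that it is a clique and that it disconnects — a naive guess such as ``the common neighbors of the triangle'' is not forced to be a clique by kite-freeness alone, so the right set is more delicate. I expect the $C_5+K_1$ case to be the most demanding, since one must first classify all ways a vertex can attach to a dominating induced $C_5$ while avoiding both a $P_5$ and a kite, and only then can the clique cutset separating $w$ from the $C_5$ be extracted. I do not expect to need \cref{thm:SPGT} or \cref{thm:2k2-k4} for this reduction itself; those enter only afterwards, when the reduced-class graphs are colored directly in the quantitative theorems.
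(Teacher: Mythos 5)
Your scaffolding (clique cutsets preserve $\chi$ and $\omega$, atoms of a hereditary class stay in the class) is standard and correct, but the entire content of the theorem is loaded onto your Key Lemma, which you do not prove --- and which is in fact \emph{false}. Here is a counterexample. Take a $C_5$ with vertices $v_1,\dots,v_5$ in cyclic order and add three vertices $x,y,w$ with $N(x)=\{v_1,v_2,v_4,w\}$, $N(y)=\{v_2,v_4,v_5,w\}$, $N(w)=\{x,y\}$ (so $x\not\sim y$ and $w$ has no neighbor on the $C_5$); call this graph $G^*$, and note it has an automorphism fixing $v_3,w$ and swapping $v_1\leftrightarrow v_5$, $v_2\leftrightarrow v_4$, $x\leftrightarrow y$. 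The only triangles of $G^*$ are $\{x,v_1,v_2\}$ and $\{y,v_4,v_5\}$, which are vertex-disjoint; hence $G^*$ contains no diamond and in particular no kite. It is also $P_5$-free: an induced $P_5$ with $w$ internal must be $a$-$x$-$w$-$y$-$b$ with $a\in N(x)\setminus(N(y)\cup\{w\})=\{v_1\}$ and $b\in N(y)\setminus(N(x)\cup\{w\})=\{v_5\}$, but $v_1v_5\in E$; an induced $P_5$ with $w$ as an endpoint must be $w$-$x$-$a$-$b$-$c$ (or its mirror image under the automorphism) with $b,c\in C_5\setminus N(x)=\{v_3,v_5\}$ and $bc\in E$, but $v_3v_5\notin E$; and a routine check of the $5$-subsets of $C_5\cup\{x,y\}$ shows none induces a path. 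Since $G^*$ has no $K_4$, its cliques are vertices, edges and the two triangles, and deleting any of these leaves a connected graph ($G^*$ is $2$-connected, and e.g.\ deleting $\{x,v_1,v_2\}$ leaves the connected graph on $\{v_3,v_4,v_5,y,w\}$). So $G^*$ is a connected $(P_5,\text{kite})$-free graph with no clique cutset, yet $\{v_1,\dots,v_5,w\}$ induces $C_5+K_1$, so $G^*$ is not $(2K_2,K_3+K_1,C_5+K_1)$-free. Thus atoms can escape the smaller class, and your reduction cannot be completed --- this is exactly the ``most demanding'' case you deferred.

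Two further remarks. First, your opening normalization runs backwards: replacing $f$ by its nondecreasing envelope $\tilde f(x)=\max_{y\le x}f(y)\ge f(x)$ and showing $\tilde f$ binds the larger class proves a \emph{weaker} statement than the theorem, which asserts that $f$ itself does. The correct normalization exploits that the smaller class is closed under joins with cliques, so $g(k):=\min_{m\ge 0}\bigl(f(k+m)-m\bigr)$ is a binding function for it satisfying $g\le f$ and $g(k+1)\ge g(k)+1$; it then suffices to prove the theorem for $g$. Second, the paper itself offers no proof to compare against: \cref{thm:binding} is quoted from~\cite{BG21}. Reductions of this type are usually driven not by clique cutsets but by operations preserving both $\chi$ and $\omega$ exactly, such as deleting a \emph{dominated} vertex ($u$ with $N(u)\subseteq N(v)$ for some $v\not\sim u$), which also requires no monotonicity assumption on $f$; note that this operation does handle $G^*$, since $N(w)=\{x,y\}\subseteq N(v_2)$, and $G^*-w$ is $(2K_2,K_3+K_1,C_5+K_1)$-free. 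Any repair of your plan would have to enlarge the Key Lemma's outcomes to include such reductions, at which point it is a different argument.
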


Next,
 we observe some simple properties of a  ($2K_2$, $K_3+K_1$, $C_5+K_1$, $K_t$)-free graph $G$ where $t\geq 4$, and are given below:
\begin{enumerate}[label=(O\arabic*)]\itemsep=0pt
\item\label{nei-Kfree} For any $v\in V(G)$, $G[N(v)]$ is $K_{t-1}$-free; otherwise $G[N(v)\cup \{v\}]$ contains a $K_t$.
\item\label{nonnei-bip} For any $v\in V(G)$, since $G[\overline{N}(v)]$ is ($2K_2, K_3, C_5$)-free, $G[\overline{N}(v)\cup \{v\}]$ is a bipartite graph.
\item\label{dom-k3} Since $G$ is $K_3+K_1$-free, every triangle in $G$ is dominating.
\item\label{dom-C5} If $G$ is $C_5+K_1$-free, then  every $C_5$ in $G$ is dominating.
 \end{enumerate}

\section{Proofs of our results}\label{sec:proof}

In this section, we prove  \cref{thm:p5k-w5,thm:p5k-w6}, and \cref{thm:2omega-3}.  We first prove the following.

\begin{lemma}\label{lem:omega+2}
Let $\mathcal{G}$ be a hereditary graph family such that for every graph $G\in\mathcal{G}$ and for every vertex $v$ in $G$, $\overline{N}(v)$ induces a bipartite graph. Let $m$ be a positive integer. If every graph in $\mathcal{G}$ with clique number at most $m$ is $q$-colorable, then every graph in $\mathcal{G}$ with clique number at most $m+1$ is ($q+2$)-colorable.
\end{lemma}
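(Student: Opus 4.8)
The plan is to fix an arbitrary graph $G\in\mathcal{G}$ with $\omega(G)\leq m+1$ and build a $(q+2)$-coloring directly, by splitting the vertex set around a single vertex. First I would pick any vertex $v\in V(G)$ and partition $V(G)=\{v\}\cup N(v)\cup\overline{N}(v)$ into the two pieces $A:=N(v)$ and $B:=\overline{N}(v)\cup\{v\}$, which I intend to color using disjoint palettes.

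For the part $A=N(v)$: since $\mathcal{G}$ is hereditary, $G[N(v)]\in\mathcal{G}$. Moreover, any clique of $G[N(v)]$ together with $v$ is a clique of $G$, so $\omega(G[N(v)])\leq\omega(G)-1\leq m$. Hence the hypothesis applies to $G[N(v)]$, and I can color $A$ with $q$ colors drawn from $\{1,\ldots,q\}$. For the part $B=\overline{N}(v)\cup\{v\}$: by the defining property of $\mathcal{G}$, $G[\overline{N}(v)]$ is bipartite; and since $v$ is nonadjacent to every vertex of $\overline{N}(v)$, it is an isolated vertex of $G[B]$, so $G[B]$ remains bipartite. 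Thus I can properly $2$-color $B$ using the two fresh colors $\{q+1,q+2\}$.

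Finally I would glue these two colorings together. Every edge of $G$ lies entirely within $A$, entirely within $B$, or between $A$ and $B$; the edges inside $A$ and inside $B$ are properly colored by construction, while any edge between $A$ and $B$ has one endpoint colored from $\{1,\ldots,q\}$ and the other from $\{q+1,q+2\}$, so its endpoints automatically receive different colors. This produces a proper $(q+2)$-coloring of $G$, as required. I do not expect a genuine obstacle here: the argument is essentially a one-step peeling, and the only points needing care are verifying that the clique number of the neighborhood drops by at least one (so that the inductive hypothesis is applicable to $G[N(v)]$) and that appending the isolated vertex $v$ does not destroy the bipartiteness of $\overline{N}(v)$. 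Once the two parts are colored with disjoint palettes, the cross edges are handled for free.
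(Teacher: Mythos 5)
Your proof is correct and follows essentially the same approach as the paper: pick an arbitrary vertex $v$, color $G[N(v)]$ with $q$ colors using heredity and the fact that the clique number drops by one, and color the bipartite graph $G[\{v\}\cup\overline{N}(v)]$ with two fresh colors. Your write-up merely makes explicit two details the paper leaves implicit (why $\omega(G[N(v)])\leq m$, and why adding the isolated vertex $v$ preserves bipartiteness), which is fine.
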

\begin{proof}
Let $G\in\mathcal{G}$ be such that the clique number of $G$ is at most $m+1$. Let $v$ be an arbitrary vertex in $G$. Then $G[N(v)]\in\mathcal{G}$ has clique number at most $m$ and therefore is $q$-colorable, while $G[\{v\}\cup\overline N(v)]$ is a bipartite graph and therefore is 2-colorable. So $G$ is ($q+2$)-colorable.
\end{proof}

\medskip

To prove \cref{thm:p5k-w5,thm:p5k-w6,thm:2omega-3}, by \cref{thm:binding}, \ref{nei-Kfree} , \ref{nonnei-bip},  and \cref{lem:omega+2}, it is enough to prove the following theorem.

\begin{theorem}\label{thm:main}
Every ($2K_2$, $K_3+K_1$, $C_5+K_1$, $K_6$)-free graph is 7-colorable.
\end{theorem}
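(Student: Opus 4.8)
The plan is to analyze the structure of a $(2K_2, K_3+K_1, C_5+K_1, K_6)$-free graph $G$ by exploiting observations \ref{dom-k3} and \ref{dom-C5}: every triangle and every $C_5$ is dominating. First I would observe that if $G$ is perfect, then by \cref{thm:SPGT} combined with $\omega(G)\leq 5$ we immediately get $\chi(G)=\omega(G)\leq 5\leq 7$. So I would assume $G$ is imperfect. Since $G$ is $(2K_2, C_5+K_1)$-free, it contains no odd hole other than possibly $C_5$ (longer holes contain $2K_2$), and its complement structure must be examined; the Strong Perfect Graph Theorem forces $G$ to contain either an induced $C_5$ or an odd antihole, and for small clique number the relevant antiholes are limited. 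Thus the natural dichotomy is whether $G$ contains an induced $C_5$.

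If $G$ contains an induced $C_5$, say on vertices $v_1,\dots,v_5$ with indices in $\langle 5\rangle$, then by \ref{dom-C5} this $C_5$ dominates $G$. I would partition $V(G)$ according to adjacency to the $C_5$: for each subset $S\subseteq\{v_1,\dots,v_5\}$ define the set of vertices whose neighborhood on the $C_5$ is exactly $S$. The $2K_2$-freeness, $K_3+K_1$-freeness, and $C_5+K_1$-freeness heavily restrict which of these $32$ types can be nonempty and how they interact. For instance, a vertex anticomplete to the $C_5$ together with an edge of the $C_5$ would create a $2K_2$, so every vertex has a neighbor on the cycle (re-proving domination); a vertex adjacent to two nonconsecutive $C_5$-vertices forms a triangle with them, which combined with a distant $C_5$-vertex threatens $K_3+K_1$. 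The main work is to enumerate the surviving vertex types, bound the clique number within and across these types using the $K_6$-freeness, and assemble a proper $7$-coloring by carefully assigning a bounded palette to each type while respecting the adjacencies forced between types.

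If $G$ contains no induced $C_5$, then $G$ is $(2K_2, C_5)$-free, and since longer odd holes contain $2K_2$, $G$ has no odd hole; I would then argue $G$ has no odd antihole either (an odd antihole $\overline{C_{2k+1}}$ for $k\geq 3$ contains $2K_2$ in $G$, and $\overline{C_7}$ must be checked directly against the forbidden list), so by \cref{thm:SPGT} $G$ is perfect and $\chi(G)=\omega(G)\leq 5$. This reduces the entire theorem to the $C_5$ case. Alternatively, because the class is $(2K_2, K_4)$-free when $\omega(G)\leq 3$, \cref{thm:2k2-k4} could seed a base case, but here we need the full strength up to $\omega(G)=5$, so I would lean on the $C_5$-domination structure rather than induction on $\omega$.

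The hardest step, I expect, will be the explicit case analysis in the $C_5$-domination scenario: controlling the interactions among the nonempty vertex types and verifying that a $7$-coloring always exists without exceeding the budget, especially near the extremal configuration where $\chi=7$ is attained (presumably a blow-up related to $\overline{C_5}$ structures). The delicate point is that $\lfloor 3\cdot 5/2\rfloor = 7$, so the coloring must be tight, leaving no slack; I would need to show that the five ``edge-neighborhood'' classes of the dominating $C_5$ can be colored with a shared palette of size $7$ by aligning cliques so that no class forces a sixth or eighth color. Organizing the adjacency constraints into a clean partition that makes this palette-sharing transparent — rather than a brute enumeration of all $32$ subsets — will be the crux of the argument.
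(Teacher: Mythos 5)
Your overall dichotomy (does $G$ contain an induced $C_5$ or not) matches the paper's, but the $C_5$-free branch contains a fatal error. You claim that in this case $G$ is perfect because long odd antiholes contain $2K_2$; this is backwards. It is long \emph{holes} ($C_7, C_9, \dots$) that contain an induced $2K_2$; an antihole $\overline{C_n}$ contains $2K_2$ if and only if $C_n$ contains an induced $C_4$, which never happens. Checking $\overline{C_7}$ ``directly against the forbidden list'' does not save you: $\overline{C_7}$, $\overline{C_9}$ and even $\overline{C_{11}}$ are all $2K_2$-free, $K_3+K_1$-free (their complements are cycles, hence claw-free), $C_5$-free, and have clique number $3$, $4$, $5$ respectively, so all three lie in the class and are imperfect. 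Consequently your claimed reduction ``the $C_5$-free case gives $\chi=\omega\leq 5$'' fails, and this is precisely where the paper has to work hard: it first reduces the $C_5$-free, $K_6$-free case to the $K_5$-free case via \cref{lem:omega+2} (so that antiholes $\overline{C_{11}}$ and longer are killed by the clique bound), and then handles the two remaining imperfect configurations separately --- graphs containing $\overline{C_9}$ (\cref{thm:c9bar}, an explicit partition into five stable sets built from the $\overline{C_9}$) and $\overline{C_9}$-free graphs (\cref{thm:c7bar}, which itself splits on whether a $\overline{C_7}\vee K_1$ occurs, using \cref{thm:SPGT} only inside vertex neighborhoods, where $K_4$-freeness from \ref{nei-Kfree} finally excludes all odd antiholes).

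The other branch (where $G$ contains a $C_5$) is directionally consistent with the paper --- the paper also partitions $V(G)$ by adjacency to a (maximally blown-up) $C_5$ into sets $A_i$, $B_i$, $D$ --- but your text is a plan rather than an argument: the paper's proof of \cref{thm:ghasc5} additionally requires an analysis of the induced structure of $G[D]$ (three cases: $G[D]$ contains a $C_5$; is $C_5$-free but contains $\overline{C_6}$; is $(C_5,\overline{C_6})$-free), together with \cref{thm:2k2-k4} to color pieces like $G[B_i\cup D]$, none of which is anticipated in your sketch. So even setting aside the perfection error, the proposal does not yet constitute a proof of either half of the theorem.
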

\begin{proof}
Let $G$ be a ($2K_2$, $K_3+K_1$, $C_5+K_1$, $K_6$)-free graph.  We consider two cases depending on whether the graph $G$ contains a $C_5$, and then the proof follows from \cref{thm:c5-free} and \cref{thm:ghasc5} given below.
\end{proof}

\begin{lemma}\label{thm:c5-free}
Every ($2K_2$, $K_3+K_1$, $C_5$, $K_6$)-free graph is $7$-colorable.
\end{lemma}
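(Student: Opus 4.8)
The goal is to show that every $(2K_2, K_3+K_1, C_5, K_6)$-free graph $G$ is $7$-colorable. Since $G$ is $K_6$-free we have $\omega(G)\le 5$, so the task is to produce a coloring using at most $7=\lfloor 3\cdot 5/2\rfloor$ colors; the slack of two over $\omega$ is exactly what the structure should buy us.

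Let me think about the structure available. $G$ is $2K_2$-free, so the complement is $C_4$-free, and $2K_2$-free graphs have strong domination structure. Combined with $K_3+K_1$-free, every triangle is dominating (this is observation (O3)). Since we are now also $C_5$-free, the graph is quite restricted. The key dichotomy I would use: either $G$ is triangle-free, or $G$ contains a triangle.

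If $G$ is triangle-free, then $\omega(G)\le 2$, and by Theorem A (every $(2K_2,K_4)$-free graph is $4$-colorable), $G$ is $4$-colorable, which is well within $7$. So assume $G$ has a triangle $T=\{a,b,c\}$. By (O3), $T$ dominates $G$, i.e. every vertex of $G\setminus T$ has a neighbor in $\{a,b,c\}$. Now I would partition $V(G)\setminus T$ according to which of $a,b,c$ the vertices are adjacent to. A crucial point: since $G$ is $K_3+K_1$-free, any vertex adjacent to a triangle must interact with it in a controlled way, and since $G$ is $K_6$-free, no vertex can be complete to a large clique. The plan is to analyze the neighborhoods $N(a), N(b), N(c)$ and their common/exclusive parts.

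The main idea I expect to work: consider a vertex $v$. Then $\overline N(v)$ (nonneighbors of $v$ together with $v$) induces a bipartite graph by (O2). So $G[\{v\}\cup\overline N(v)]$ needs only $2$ colors, and it remains to color $G[N(v)]$ with $5$ colors and reuse them cleverly, or to argue $N(v)$ has small chromatic number. Since $G[N(v)]$ is $K_5$-free (by (O1), as $\omega(G)\le 5$), and inherits $2K_2$-freeness, the real task reduces to understanding the chromatic number of the neighborhood of a vertex. In fact, picking $v$ in a triangle and using that every triangle dominates, I would try to show $N(v)$ itself has a good structure — likely that $G[N(v)]$ is perfect or nearly so. The heart of the matter: I would aim to show that for a well-chosen vertex $v$ (for instance, a vertex of maximum degree, or a vertex in a maximum clique), $G[N(v)]$ is $5$-colorable in a way compatible with a $2$-coloring of $\{v\}\cup\overline N(v)$, giving $7$ total. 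The hard part will be establishing that $G[N(v)]$ is $5$-colorable: $N(v)$ is $(2K_2, K_3+K_1, C_5, K_5)$-free, and one might hope to invoke Strong Perfect Graph Theorem (Theorem B) to show $G[N(v)]$ is perfect — a $2K_2$-free, $C_5$-free graph has no odd hole $C_5$, but could still contain $C_7$ or odd antiholes, so perfection is not automatic and must be ruled out using the remaining forbidden subgraphs.

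The main obstacle, as I see it, is controlling odd holes and odd antiholes of length $\ge 7$ to invoke perfection where needed, and more generally handling the case analysis when $G$ contains a triangle but the neighborhoods of triangle vertices overlap in complicated ways. Concretely, I would structure the proof as: (1) dispose of the triangle-free case via Theorem A; (2) fix a dominating triangle $\{a,b,c\}$ and partition the rest of the vertices by their adjacency pattern to $a,b,c$ into the (at most) seven nonempty types, using $2K_2$-freeness and $K_3+K_1$-freeness to rule out certain cross-edges; (3) use $K_6$-freeness to bound clique sizes within these parts; and (4) assemble an explicit $7$-coloring by assigning a palette to each part, showing adjacent parts can share or must avoid colors. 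I anticipate the bulk of the work — and the genuine difficulty — lies in step (4), verifying that the edges between the different adjacency-classes are structured enough (again via $2K_2$- and $C_5$-freeness) that seven color classes suffice; ruling out the configurations that would force an eighth color is where Theorem B (SPGT) and a careful use of $C_5$-freeness should come into play.
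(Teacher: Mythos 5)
There is a genuine gap. Your opening reduction is exactly the paper's first move (its Lemma~\ref{lem:omega+2}): color $\{v\}\cup\overline N(v)$ with $2$ colors using \ref{nonnei-bip}, and reduce everything to showing that $G[N(v)]$, which is ($2K_2$, $K_3+K_1$, $C_5$, $K_5$)-free, is $5$-colorable. But that last statement is the entire content of the lemma, and your proposal does not prove it: you explicitly defer it (``the hard part will be establishing that $G[N(v)]$ is $5$-colorable'', ``I anticipate the bulk of the work \ldots lies in step (4)''), and the two routes you sketch for it do not go through. The perfection route is not merely ``not automatic'' --- it is false: $\overline{C_9}$ is itself ($2K_2$, $K_3+K_1$, $C_5$, $K_5$)-free (its complement $C_9$ has no induced $C_4$, no $K_{1,3}$, no $C_5$, and $\alpha(C_9)=4$) and is imperfect with $\chi(\overline{C_9})=5$, so no amount of forbidden-subgraph bookkeeping will make $G[N(v)]$ perfect; the same goes for $\overline{C_7}$. (Your worry about $C_7$ as an odd hole is vacuous, since $C_7$ contains an induced $2K_2$; the only real obstructions are the odd antiholes $\overline{C_7}$ and $\overline{C_9}$, longer ones containing $K_5$.) Your alternative plan --- partition $V(G)$ by adjacency to a dominating triangle and assign palettes to the classes --- is not developed at all, and there is no indication it can succeed, precisely because the extremal configurations forcing $5$ colors are antiholes, not triangles, and the coloring must be built around them.

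The paper's proof of the $5$-colorability statement is a substantial case analysis that your proposal omits entirely: if the graph contains a $\overline{C_9}$, it partitions the vertex set by adjacency type to the $\overline{C_9}$ into sets $A_i$, $B_i$, proves roughly a dozen adjacency properties among them, and exhibits five explicit stable sets covering the graph (in two subcases depending on whether certain pairs $[A_i,B_{i\pm3}]$ are anticomplete); if the graph is $\overline{C_9}$-free, it further splits on whether it contains $\overline{C_7}\vee K_1$ --- only in the ($\overline{C_7}\vee K_1$)-free subcase does the SPGT argument you hoped for actually apply (there $G[N(v)]$ is $K_4$-free perfect, hence $3$-colorable, giving $5$) --- and otherwise carries out another long structural analysis around the $\overline{C_7}$ and its dominating vertex. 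None of this machinery, nor any substitute for it, appears in your proposal, so what you have is a correct reduction plus a plan whose central step is both unproved and, in the form you suggest (perfection of neighborhoods), unprovable.
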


\begin{lemma}\label{thm:ghasc5}
Every ($2K_2$, $K_3+K_1$, $C_5+K_1$, $K_6$)-free graph that contains a $C_5$ is $7$-colorable.
\end{lemma}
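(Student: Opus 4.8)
The plan is to exploit the dominating induced $C_5$ guaranteed by the hypotheses. Since $G$ contains a $C_5$ and is $(C_5+K_1)$-free, by \ref{dom-C5} this cycle, say $C=v_1v_2v_3v_4v_5$ (indices in $\langle 5\rangle$), dominates $G$, so every vertex outside $C$ has a neighbor on $C$. First I would pin down the possible attachments of an external vertex $x$ to $C$ by testing $G[C\cup\{x\}]$ against the $2K_2$- and $(K_3+K_1)$-free conditions. A single neighbor $v_i$, or an adjacent pair $\{v_i,v_{i+1}\}$, on $C$ produces a $2K_2$ with the ``opposite'' edge of $C$ (e.g.\ $x v_{i+1}$ together with $v_{i+3}v_{i+4}$); and three consecutive neighbors $\{v_{i-1},v_i,v_{i+1}\}$ give the triangle $\{x,v_{i-1},v_i\}$ with the anticomplete vertex $v_{i+2}$, a $K_3+K_1$. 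Hence $N(x)\cap C$ is either a pair of vertices at distance two on $C$, or a set of size at least three that contains an edge of $C$ but is not three consecutive vertices.

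Based on this I would set $U=\{x\notin C : x \text{ is complete to } C\}$ and let $R$ be the remaining external vertices, further partitioned into the five ``clone'' classes $X_i$ (those with $N(x)\cap C$ equal to the two neighbors of $v_i$) and the intermediate classes of size-three and size-four attachments. The key structural input for $U$ is that, since any edge $v_iv_{i+1}$ of $C$ is complete to $U$, every clique of $G[U]$ extends by $v_i,v_{i+1}$, so $\omega(G[U])\le \omega(G)-2\le 3$; thus $G[U]$ is $(2K_2,K_4)$-free and, by \cref{thm:2k2-k4}, is $4$-colorable. As $U$ is complete to $C$, the subgraph $G[U\cup C]$ is the join $G[U]\vee C$, whence $\chi(G[U\cup C])=\chi(G[U])+\chi(C)\le 4+3=7$. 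This already matches the extremal configuration $C_5\vee C_5\vee K_1$, in which $R=\emptyset$, the part $G[U]=C_5\vee K_1$ contributes the $4$ colors and $C$ contributes the remaining $3$.

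The heart of the argument, and the step I expect to be the main obstacle, is then to absorb the vertices of $R$ without exceeding seven colors, given that $C\cup U$ already saturates the budget. Here I would lean heavily on $2K_2$-freeness: since each $r\in R$ has an edge to $C$, the set of vertices anticomplete to both endpoints of that edge is a stable set, which forces every class of $R$ to be ``almost complete'' to the rest of the graph and confines the genuinely free structure to a blow-up of $C$. Concretely I would establish adjacency rules among the $X_i$, the intermediate classes, and $U$, using \ref{dom-k3} to make every triangle dominating and \ref{nei-Kfree} to cap clique sizes, and then argue that whenever $R\neq\emptyset$ one can choose the coloring of $G[U]$ so as to free a color: a vertex missing $v_i$ should be assignable the color of $v_i$ once one verifies it has no neighbor of that color. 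I anticipate this reduces to a finite but delicate case analysis over the attachment types, and that a clean way to organize it is to treat separately the subcase in which some external vertex fails to be complete to $C$ and to show that the $4$-coloring of $G[U]$ can then be refined to reserve room for it.
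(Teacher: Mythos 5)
Your setup is sound and, in skeleton, matches the paper's: your distance-two-pair classes, your edge-plus-opposite-vertex attachments, and your set $U$ of vertices complete to $C$ correspond respectively to the paper's blown-up classes $A_i$, its sets $B_i$, and its set $D$; and your observation that $G[U]$ is $(2K_2,K_4)$-free (a clique of $U$ extends by an edge of $C$) and hence $4$-colorable by \cref{thm:2k2-k4} is correct and is used by the paper as well. (A small slip: four consecutive neighbors on $C$ are also impossible, since e.g.\ $N(x)\cap C=\{v_1,v_2,v_3,v_4\}$ makes $\{x,v_2,v_3,v_5\}$ a $K_3+K_1$; so the only attachments are the distance-two pair, the edge plus opposite vertex, and all of $C$.)

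The genuine gap is the final step, which you yourself flag as ``the main obstacle'': absorbing $R$ into a $7$-coloring. Everything before it costs you nothing beyond what the extremal example $C_5\vee C_5\vee K_1$ already forces, so the whole content of the lemma lives exactly there, and your proposal offers only a plan (``free a color by recoloring $G[U]$'', ``a finite but delicate case analysis'') with no argument that it terminates. The obstruction is concrete: your clone classes and $B$-type classes are not anticomplete to each other or to $U$ in general, so one cannot reserve one color per stray vertex; and the additive split you rely on is too lossy, since $G[C\cup U]$ may genuinely need $7$ colors while $R\neq\emptyset$. The paper's route through this difficulty is structural, not greedy: it first takes the blow-up $A$ of $C$ \emph{maximal} (so all clone-like vertices are swallowed into $A$, giving the clean partition $V=A\cup B\cup D$), then shows that if some $[B_i,B_{i+2}]$ is anticomplete one can $4$-color $B_i\cup D$ by \cref{thm:2k2-k4} and $3$-color the rest (this is the only place a split like yours works, because the anticompleteness is what makes the complement $3$-colorable rather than $4$-colorable); and when every $[B_i,B_{i+2}]$ has an edge, it runs a three-way case analysis on the inner structure of $G[D]$ --- $G[D]$ contains a $C_5$, or is $C_5$-free with a $\overline{C_6}$, or is $(C_5,\overline{C_6})$-free --- each case requiring a dozen adjacency properties between the $B_i$'s and the pieces of $D$ and explicit stable-set constructions. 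None of this machinery is present or replaceable by the recoloring heuristic you sketch, so the proposal as it stands does not prove the lemma.
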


 In the next two subsections, we will prove these two lemmas.

\subsection{Proof of Lemma \ref{thm:c5-free}}\label{ssc:c5-free}

By Lemma \ref{lem:omega+2}, we may reduce Lemma \ref{thm:c5-free} to the following lemma.

\begin{lemma}\label{thm:c5k5-free}
Every ($2K_2$, $K_3+K_1$, $C_5$, $K_5$)-free graph is 5-colorable.
\end{lemma}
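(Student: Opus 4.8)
The plan is to prove \cref{thm:c5k5-free} directly by analyzing the structure forced by the forbidden subgraphs. Let $G$ be a ($2K_2$, $K_3+K_1$, $C_5$, $K_5$)-free graph. The first observation I would record is that since $G$ is $C_5$-free and ($2K_2, K_3, C_5$)-free graphs are quite restricted, the absence of $C_5$ together with $2K_2$-freeness pushes us close to perfection. Indeed, by \cref{thm:SPGT}, a graph is perfect iff it contains no odd hole and no odd antihole; a $2K_2$-free graph contains no hole of length $\geq 6$ (such a hole contains an induced $2K_2$), and being $C_5$-free it contains no $C_5$ either, so $G$ has no odd hole at all. Thus the only possible obstruction to perfection is an odd antihole. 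So the first main step is to understand which odd antiholes survive: $\overline{C_7}$ contains a $K_3+K_1$? I would check the small antiholes explicitly — $\overline{C_7}$ has clique number $3$ and I must verify whether it contains $K_3+K_1$, $2K_2$, or $C_5$ as induced subgraphs. If every odd antihole on $7$ or more vertices contains one of our forbidden graphs, then $G$ is perfect, hence $\chi(G)=\omega(G)\leq 4$, which is even better than needed.

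The likely outcome, however, is that $\overline{C_7}$ (equivalently $C_7$ itself, since $\overline{C_7}$ is vertex-transitive and self-complementary-ish in structure) is \emph{not} excluded, so I cannot simply invoke perfection. In that case the plan is to argue structurally about where the chromatic number can exceed the clique number. Since $\omega(G)\leq 4$, I want to show $\chi(G)\leq 5$. I would fix a vertex $v$ of maximum degree (or an arbitrary vertex) and exploit \ref{nonnei-bip}: the set $\overline{N}(v)\cup\{v\}$ induces a bipartite graph, so it is $2$-colorable. It then remains to color $N(v)$ with $3$ colors; but $G[N(v)]$ is $K_4$-free by \ref{nei-Kfree} (here $t=5$) and inherits $2K_2$-freeness, and a ($2K_2,K_4$)-free graph is $4$-colorable by \cref{thm:2k2-k4} — unfortunately that gives $2+4=6$, one too many. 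So the naive split is not tight enough, and the real work is to color more cleverly.

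The sharper approach I would pursue is to handle the $C_5$-free structure more globally. Because $G$ is $2K_2$-free, its complement $\overline{G}$ is $C_4$-free; and ($2K_2, K_3+K_1$)-free graphs have the property that the neighborhoods interact rigidly. I would try to show that $V(G)$ decomposes so that the ``imperfect part'' is confined to a single $\overline{C_7}$-like gadget (a blow-up of an antihole), and that blow-ups of the relevant antiholes with $\omega\leq 4$ are $5$-colorable by an explicit coloring — mirroring how blow-ups of $C_5$ realize the $\lceil 5\omega/4\rceil$ bound. Concretely, if $\overline{C_7}$ is the unique minimal imperfect induced subgraph that survives, I would clique-cover or partition $G$ using a dominating structure (every triangle dominates, by \ref{dom-k3}) and then color by combining an optimal coloring of the antihole part with stable sets covering the rest. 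The main obstacle I expect is precisely this: ruling out or tightly controlling the surviving odd antiholes and proving that their blow-ups (under the $K_5$-free, $K_3+K_1$-free constraints) never force a sixth color. Getting the constant exactly $5$ rather than $6$ will require a careful case analysis of how a $\overline{C_7}$ can sit inside $G$ and how the $K_3+K_1$-freeness (forcing every triangle to be dominating) limits the vertices outside it, so that the ``overflow'' color classes from the antihole can absorb the remaining vertices without exceeding five stable sets.
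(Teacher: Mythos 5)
Your opening observations are sound: $2K_2$-freeness plus $C_5$-freeness kills all odd holes, so by \cref{thm:SPGT} the only obstructions to perfection are odd antiholes, and you correctly anticipate that $\overline{C_7}$ survives (it is indeed ($2K_2$, $K_3+K_1$, $C_5$, $K_5$)-free). You also correctly diagnose that the naive split into $N(v)$ (4 colors via \cref{thm:2k2-k4}) and $\overline{N}(v)\cup\{v\}$ (2 colors via \ref{nonnei-bip}) gives only 6. But from there the proposal stops being a proof and becomes a plan with two genuine gaps. First, you assume $\overline{C_7}$ is ``the unique minimal imperfect induced subgraph that survives''; it is not. Since $\overline{C_n}$ is always $K_3+K_1$-free (a common nonneighbor of a triangle in $\overline{C_n}$ would be a vertex of degree $\geq 3$ in $C_n$) and $K_5$-freeness only excludes $\overline{C_{11}}$ and longer, the antihole $\overline{C_9}$ also survives and requires its own treatment. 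The paper devotes a separate, substantial argument (\cref{thm:c9bar}) to graphs containing $\overline{C_9}$: it partitions $V$ into the antihole $Q$ plus families $A_i$, $B_i$ determined by neighborhoods in $Q$, and exhibits explicit partitions into five stable sets in two subcases. Nothing in your proposal addresses this case.

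Second, and more importantly, you never find the idea that closes the $6$-versus-$5$ gap in the remaining case. The paper's trick (\cref{thm:c7bar}) is to apply \cref{thm:SPGT} not to $G$ but to the neighborhood $G[N(v)]$: once $G$ is additionally $\overline{C_9}$-free and $(\overline{C_7}\vee K_1)$-free, the subgraph $G[N(v)]$ has no odd hole (inherited), no $\overline{C_7}$ (that would form $\overline{C_7}\vee K_1$ with $v$), no $\overline{C_9}$, and no longer odd antihole (those contain $K_5$), hence is perfect and $K_4$-free, hence $3$-colorable — giving $3+2=5$. This localization of perfection to $N(v)$ is exactly what replaces the lossy bound from \cref{thm:2k2-k4}, and it leaves only one further hard case, namely graphs containing $\overline{C_7}\vee K_1$, which the paper again handles by an explicit decomposition ($A_i$, $B_i$, $D_i$, $W$) and a hand-built coloring. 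Your ``blow-up of the antihole'' intuition points in roughly the right direction for these structural cases, but as written it is a statement of the obstacle (``the main obstacle I expect is precisely this''), not an argument that overcomes it; the proposal therefore does not constitute a proof of the lemma.
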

\begin{proof}
  We consider two cases depending on whether the graph  contains a $\overline{C_9}$, and  the proof follows from  \cref{thm:c9bar,thm:c7bar} given below.
\end{proof}

\begin{lemma}\label{thm:c9bar}
Every ($2K_2$, $K_3+K_1$, $C_5$, $K_5$)-free graph which contains a  $\overline{C_9}$ is 5-colorable.
\end{lemma}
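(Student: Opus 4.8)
The plan is to fix the given $\overline{C_9}$, say on vertices $v_0,\dots,v_8$ with $v_iv_j\in E(G)$ exactly when $i-j\not\equiv\pm1\pmod 9$, write $Q$ for this antihole, and extend a fixed optimal coloring of $Q$ to all of $G$. Since the cliques of $Q$ are the four-element sets of pairwise non-consecutive indices, we have $\omega(Q)=4$ and $\chi(Q)=5$, and a proper $5$-coloring is obtained by partitioning $\{0,\dots,8\}$ into the four ``edges'' $\{0,1\},\{2,3\},\{4,5\},\{6,7\}$ and the singleton $\{8\}$; call this coloring $c$. Because $Q$ already forces $5$ colors, the whole task is to assign each vertex outside $Q$ a color of $c$ without creating a monochromatic edge. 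Note first that $Q$ contains triangles, e.g.\ $\{v_0,v_2,v_4\}$, and since $G$ is $(K_3+K_1)$-free every triangle of $G$ is dominating, so every vertex of $G$ has a neighbor in $Q$.

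First I would pin down, for a vertex $u\notin Q$, the index set $B(u)\subseteq\mathbb Z_9$ of its non-neighbors in $Q$, and write $A(u)=\mathbb Z_9\setminus B(u)$ for its neighbor indices. Each of the four forbidden subgraphs contributes a constraint. If $u$ is adjacent to $v_k$ but to neither $v_{k-1}$ nor $v_{k+1}$, then $\{u,v_k\}$ together with the $Q$-edge $\{v_{k-1},v_{k+1}\}$ induces a $2K_2$; hence every neighbor of $u$ in $Q$ has a neighbor of $u$ among its two cyclic neighbors, i.e.\ $C_9[A(u)]$ has no isolated vertex. Since every triangle of $Q$ is a triple of pairwise non-consecutive indices and $u$ must meet each such triple (again because triangles dominate), $B(u)$ contains no such triple, so its independence number inside $C_9$ is at most $2$; dually, $K_5$-freeness forbids $u$ from being complete to a $K_4$ of $Q$, so $A(u)$ has $C_9$-independence at most $3$. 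Finally, if $B(u)$ is not a set of consecutive indices, a short calculation produces an induced $C_5$ on $u$ together with four suitable vertices $v_j$; so $C_5$-freeness forces $B(u)$ to be a single arc. Putting these together, $B(u)$ is an arc of length exactly $3$ or $4$: every vertex outside $Q$ is non-adjacent to precisely three or four consecutive vertices of $Q$.

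Next I would assign each $u\notin Q$ to a \emph{sector} $V_i$ chosen so that $\{v_{i-1},v_i,v_{i+1}\}\subseteq\overline N(u)$ (this $i$ is unique when $|B(u)|=3$ and is one of two consecutive choices when $|B(u)|=4$), and color all of $V_i\cup\{v_i\}$ with $c(i)$. To see that this is proper I would establish two adjacency facts. Each sector is a stable set: if $u,u'\in V_i$ were adjacent, then $\{u,u'\}$ and the $Q$-edge $\{v_{i-1},v_{i+1}\}$, with $v_{i-1},v_{i+1}\in\overline N(u)\cap\overline N(u')$, would induce a $2K_2$. And two sectors receiving the same color under $c$ must be anticomplete: for $V_i,V_{i+1}$ with $c(i)=c(i+1)$ and adjacent $u\in V_i$, $u'\in V_{i+1}$, the common neighborhood $A(u)\cap A(u')$ contains a $5$-arc, hence the triangle $\{v_{i+3},v_{i+5},v_{i+7}\}$ of $Q$, and then $\{u,u',v_{i+3},v_{i+5},v_{i+7}\}$ is a $K_5$. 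With both facts in hand, every monochromatic adjacency would have to join two sectors $V_i,V_j$ with $i,j$ non-consecutive, where $c(i)\ne c(j)$ by construction; adjacencies between a sector $V_i$ and a vertex $v_j\in Q$ force $j\in A(u)$, hence $v_i\sim v_j$ and $c(i)\ne c(j)$, so they are harmless as well. Thus $c$ lifts to a proper $5$-coloring of $G$.

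The hard part will be the vertices with $|B(u)|=4$. Such a vertex has \emph{two} admissible sectors, and the clean $K_5$ argument for anticompleteness degenerates when one endpoint is of this type, since the common neighborhood can shrink to a $4$-arc, which carries no triangle; meanwhile the $2K_2$ argument only helps when the two relevant arcs overlap in a $Q$-edge. Resolving this requires choosing the sector of each length-$4$ vertex — and, if necessary, rotating or re-pairing the base coloring $c$ of $Q$ — globally and consistently, so that no edge of $G$ ever lands inside a monochromatic pair of sectors. I expect this bookkeeping, carried out by combining $2K_2$-, $C_5$- and $K_5$-freeness in a case analysis of how length-$3$ and length-$4$ vertices of neighboring sectors interact, to be the technical core of the argument; the rest is the structural reduction described above.
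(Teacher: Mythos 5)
Your structural reduction is correct and coincides with the paper's opening step: every vertex outside $Q$ is non-adjacent to exactly an arc of three or four consecutive vertices of $Q$ (the paper's sets $B_i$ and $A_i$ respectively), each sector is stable by your $2K_2$ argument, and two arc-$3$ vertices in consecutive same-colored sectors are non-adjacent by your $K_5$ argument, so your scheme does yield a $5$-coloring in the special case where all outside vertices have exactly three non-neighbors on $Q$. But the proof stops exactly where the lemma's difficulty begins, and you say so yourself: for the arc-$4$ vertices you offer only the expectation that some ``global and consistent'' choice of sectors, possibly after re-pairing $c$, will exist. That expectation is the actual content of the lemma, and it is not automatic: an arc-$4$ vertex can genuinely be adjacent to an arc-$3$ vertex in a flanking sector whenever the union of their arcs has length five (the common neighborhood is then a $4$-arc with no triangle, so there is no $K_5$ obstruction, and no $2K_2$ or $C_5$ obstruction exists either --- in the paper's notation these are the pairs $[A_i,B_{i\pm3}]$, whose edges the paper explicitly allows). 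Consequently the assignment of an arc-$4$ vertex must depend on its actual neighborhood, not merely on its arc, and no fixed pairing of $Q$ handles all such vertices uniformly.

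The machinery the paper uses to close this gap is entirely absent from your sketch: (a) $[A_i,B_{i+4}\cup B_{i-4}]$ is anticomplete; (b) each $a\in A_i$ has neighbors in at most one of $B_{i+3}$ and $B_{i-3}$, which is what permits \emph{splitting each set $A_i$ vertex by vertex} and routing each part to the color class away from the $B$-set it sees (e.g.\ $\{a\in A_2\mid N(a)\cap B_5=\emptyset\}$ and $\{a\in A_2\mid N(a)\cap B_5\neq\emptyset\}$ land in different classes); and (c) a global case distinction on whether some $[A_i,B_{i\pm3}]$ is anticomplete, where in the case that all such pairs carry edges one proves the dual fact --- each $b\in B_i$ has neighbors in at most one of $A_{i+3}$ and $A_{i-3}$ --- via a $C_5$ built through an edge guaranteed to exist \emph{elsewhere} in the graph by the case hypothesis; this step is genuinely nonlocal and cannot be obtained by forbidden-subgraph checks around the two vertices alone. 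Two different explicit $5$-colorings are then written down, one per case. Since none of (a)--(c) appears in your proposal even in embryonic form, what you have is a correct setup plus an honest statement of the unsolved core, not a proof.
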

\begin{proof}
Let $G:=(V,E)$ be a ($2K_2$, $K_3+K_1$, $C_5$, $K_5$)-free graph, and let $Q:=\{v_1,v_2,\ldots,v_9\}$ induces a $\overline{C_9}$ in $G$ such that $v_iv_{i+1}\notin E$ for $i\in \langle 9 \rangle$.  For each $i\in \langle 9 \rangle$, let:
\begin{eqnarray*}
A_i &:=& \{u\in V\setminus Q\mid N_Q(u)=Q\setminus \{v_{i+3},v_{i+4},v_{i+5},v_{i+6}\}\}, \mbox{ and}\\
B_i&:=&\{u\in V\setminus Q\mid N_Q(u)=Q\setminus \{v_{i-1},v_{i},v_{i+1}\}\}.
\end{eqnarray*}
Let $A:=\cup_{i=1}^9A_i$ and $B:=\cup_{i=1}^9B_i$. Then the following properties hold, where   $i\in \langle 9 \rangle$.
\begin{enumerate}[label=(\arabic*),series=edu*] \setlength{\itemsep}{0pt}
\item\label{c9bar:V-Q:prop} For any $u\in V\sm Q$, the following hold:

\vspace{-0.25cm}
\begin{enumerate}[label=(\roman*)]
\item $N(u)\cap \{v_i,v_{i+2},v_{i-2}\}\neq\emptyset$. (Otherwise, \{$u,v_i,v_{i+2},v_{i-2}$\} induces a $K_3+K_1$.)

\item If $uv_i\in E$, then either $uv_{i+1}\in E$ or $uv_{i-1}\in E$.  (Otherwise, \{$u,v_i,v_{i-1},v_{i+1}$\} induces a $2K_2$.)

\item If $uv_i,uv_{i+1}\in E$, then either $uv_{i+2}\in E$ or $uv_{i-1}\in E$. (Otherwise, \{$u,v_i,v_{i+2},v_{i-1},$ $v_{i+1}$\} induces a $C_5$.)

\item If $uv_i\notin E$, then either $uv_{i+1}\notin E$ or $uv_{i-1}\notin E$. (Otherwise, \{$v_i,u,v_{i+1},v_{i-1}$\} induces a $K_3+K_1$.)

\item $u$ has a non-neighbor in $\{v_i,v_{i+2},v_{i+4},v_{i+6}\}$, and in $\{v_{i+1},v_{i+3},v_{i+5},v_{i-1}\}$. (Otherwise, \{$v_i,v_{i+2},v_{i+4},v_{i+6},u$\} (or  \{$v_{i+1},v_{i+3},v_{i+5},v_{i-1},u$\})  induces a $K_5$.)
\end{enumerate}

\item\label{clm:c9bar-part} $V=Q\cup A\cup B$.

\begin{proof2}
Suppose that there is a vertex $u\in V\setminus (Q\cup A\cup B)$. Then since $G$ is $K_3+K_1$-free, every vertex in $V\setminus Q$ has a neighbor in $Q$.
Also, note that, by \ref{c9bar:V-Q:prop}:(ii), (iii) and (iv), $u$ is complete to three consecutive vertices of $Q$. Now we obtain a contradiction in three cases as follows.

\vspace{-0.25cm}
\begin{itemize}\itemsep=0pt
\item Suppose that there is an index $i\in \langle 9 \rangle$ such that $u$ is complete to $\{v_i,v_{i+1},\ldots,v_{i+4}\}$, say $i=1$. Then by \ref{c9bar:V-Q:prop}:(v), $uv_7,uv_8\notin E$. Also, if  both $uv_9\in E$ and $uv_6\in E$, then we have a contradiction to \ref{c9bar:V-Q:prop}:(v). But then $u\in A\cup B$, a contradiction.

\item Suppose that $u$ is not complete to any five consecutive vertices of $Q$.  Now suppose that there is an index $i\in \langle 9 \rangle$ such that $u$ is complete to $\{v_i,v_{i+1},v_{i+2},v_{i+3}\}$, say $i=1$. Then $uv_5,uv_9\notin E$.
So, by \ref{c9bar:V-Q:prop}:(iv), $uv_6,uv_8\notin E$, and by \ref{c9bar:V-Q:prop}:(i), $uv_7\in E$, a contradiction to \ref{c9bar:V-Q:prop}:(ii).
\item Suppose that $u$ is not complete to any four consecutive vertices of $Q$. We may assume that $u$ is complete to $\{v_1,v_2,v_3\}$. Then $uv_4,uv_9\notin E$. Then by \ref{c9bar:V-Q:prop}:(iv), $uv_5,uv_8\notin E$, and hence again by \ref{c9bar:V-Q:prop}:(iv), $uv_6,uv_7\in E$ which is a contradiction to   \ref{c9bar:V-Q:prop}:(iii).
\end{itemize}
The above contradictions prove that \ref{clm:c9bar-part} holds.
\end{proof2}
\end{enumerate}
Next we prove some useful properties of the  subsets $A$ and $B$.

\begin{enumerate} [label=(\arabic*),resume=edu*]
\setlength{\itemsep}{0pt}

\item\label{clm:c9bar-aa} $A_i$ is a stable set, and $[A_i, A_{i+1}\cup A_{i-1}]$ is  anti-complete.

\begin{proof2}
Suppose there are adjacent vertices, say, $a$ and $a'$, in one of the stated lists. If $a,a'\in A_i$, then \{$a,a',v_{i+3},v_{i-3}$\} induces a $2K_2$. If $a\in A_i$ and $a'\in A_{i+1}$, then \{$a,a',v_{i-3},v_{i+4}$\} induces a $2K_2$. 
\end{proof2}

\item\label{clm:c9bar-bb} $B_i$ is a stable set, and  $[B_i, B_{i+1}\cup B_{i-1}]$ is anti-complete.

\begin{proof2}
Suppose there are adjacent vertices, say, $b$ and $b'$, in one of the stated lists. If $b,b'\in B_i$, then $\{b,b',v_{i+1},v_{i-1}\}$ induces a $2K_2$.  If $b\in B_i$ and $b'\in B_{i+1}$, then $\{b,b',v_{i+3},v_{i-4},v_{i-2}\}$ induces a $K_5$. 
\end{proof2}

\item\label{clm:c9bar-ab} $[A_i,B_{i+4}\cup B_{i-4}]$ is anti-complete.

\begin{proof2}
If there are adjacent vertices, say $a\in A_i$ and $b\in B_{i+4}$, then \{$a,b,v_{i-2},v_i,v_{i+2}$\} induces a $K_5$.
\end{proof2}

\item\label{clm:c9bar-ab+3} For any vertex $a\in A_i$, one of $N(a)\cap B_{i+3}$ and $N(a)\cap B_{i-3}$ is empty.

\begin{proof2}
Suppose that there are vertices, say, $b\in N(a)\cap B_{i+3}$ and $b'\in N(a)\cap B_{i-3}$. Then since $\{b,v_{i+5},b',v_{i+4}\}$ does not induce a $2K_2$, we have $b_1b_2\in E$. But then \{$a,b,b',v_{i-1},$ $v_{i+1}$\} induces a $K_5$.
\end{proof2}
\end{enumerate}
 First suppose that  $[A_i, B_{i+3}]$ or $[A_i, B_{i-3}]$ is anti-complete for some $i\in \langle 9 \rangle$; say $[A_9,B_3]$ is anti-complete.  Then we define the following sets: $S_1:= \{v_1,v_2\}\cup A_6\cup A_7\cup B_2$,  $S_2 :=\{v_3,v_4\}\cup A_8\cup A_9\cup B_3\cup B_4$,
 \begin{eqnarray*}
 S_3 &:= & \{v_5,v_6\}\cup A_1\cup B_5\cup B_6 \cup \{a\in A_2\mid N(a)\cap B_5= \emptyset\},\\
  S_4 &:=& \{v_9\}\cup A_5\cup B_1\cup B_9 \cup \{a\in A_4\mid N(a)\cap B_1= \emptyset\}, \mbox{ and }\\
S_5 &:=& \{v_7,v_8\}\cup A_3\cup B_7\cup B_8\cup \{a\in A_2\mid N(a)\cap B_5\neq \emptyset\}\cup \{a\in A_4\mid N(a)\cap B_1\neq \emptyset\}.
\end{eqnarray*}
Then clearly $V = \cup_{j=1}^5S_j$. Moreover,  we have the following:
\begin{claim}\label{col-c9bar-case1}
 $S_1, S_2, \ldots, S_5$ are stable sets.
\end{claim}
\begin{proof2}
By \ref{clm:c9bar-aa}, \ref{clm:c9bar-bb}  and \ref{clm:c9bar-ab}, and by the definitions of $A$ and $B$, clearly $S_1,\ldots, S_4$, and $\{v_7,v_8\}\cup A_3\cup B_7\cup B_8$ are stable sets. Now if some $x\in A_2$ has a neighbor in $B_5$, then, by \ref{clm:c9bar-ab+3}, $N(x)\cap B_8=\es$, and so by the above properties $\{v_7,v_8\}\cup A_3\cup B_7\cup B_8 \cup \{a\in A_2\mid N(a)\cap B_5\neq \emptyset\}$ is a stable set. Likewise, $\{v_7,v_8\}\cup A_3\cup B_7\cup B_8\cup \{a\in A_4\mid N(a)\cap B_1\neq \emptyset\}$ is a stable set. So, by \ref{clm:c9bar-aa}, we conclude that $S_5$ is also a stable set. This proves \cref{col-c9bar-case1}.
\end{proof2}

\smallskip
By \cref{col-c9bar-case1}, we conclude that $\chi(G)\leq 5$, hence $G$ is $5$-colorable.

So we may assume that for each $i\in \langle 9 \rangle$, both $[A_i, B_{i+3}]$ and $[A_i,B_{i-3}]$ are not anti-complete.
Then we have the following.
\begin{claim}\label{clm:c9bar-ba+3}
 For any vertex $b\in B_i$, one of $N(b)\cap A_{i+3}$ and $N(b)\cap A_{i-3}$ is empty.
  \end{claim}
\begin{proof2}
We will prove for $i=1$. Suppose that there are vertices, say, $a\in N(b)\cap A_4$ and $a'\in N(b)\cap A_7$. Then since \{$b,a,a',v_1$\} does not induce a $K_3+K_1$, we have $aa'\notin E$.  By our assumption, there are vertices $x\in A_6$ and  $y\in B_9$ such that $xy\in E$. Now, by \ref{clm:c9bar-ab}, $xb,ay\notin E$, and, by \ref{clm:c9bar-bb}, $by\notin E$. Then since  \{$a,b,x,y$\} does not induce a $2K_2$, we have $ax\in E$, and since \{$a',b,x,y$\} does not induce a $2K_2$, we have $a'y\in E$. But then $\{b,a,x,y,a'\}$ induces a $C_5$. This proves \cref{clm:c9bar-ba+3}.
\end{proof2}

\smallskip
Now, by  \cref{clm:c9bar-ba+3}, we   define the following sets:
\begin{eqnarray*}
 S_1 &:= & \{v_1,v_2\}\cup A_6\cup B_1\cup B_2 \cup \{a\in A_7\mid N(a)\cap B_1= \emptyset\},\\
 S_2 &:= & \{v_3,v_4\}\cup A_8\cup B_3\cup B_4 \cup \{a\in A_7\mid N(a)\cap B_1\neq \emptyset\}\cup \{a\in A_9\mid N(a)\cap B_6\neq \emptyset\} ,\\
 S_3 &:= & \{v_5,v_6\}\cup A_1\cup B_5\cup B_6 \cup  \{a\in A_9\mid N(a)\cap B_6= \emptyset\},\\
  S_4 &:=& \{v_7,v_8\}\cup A_2\cup A_3\cup B_7 \cup \{b\in B_8\mid N(b)\cap A_2= \emptyset\}, \mbox{ and }\\
S_5 &:=& \{v_9\}\cup A_4\cup A_5\cup B_9\cup \{b\in B_8\mid N(b)\cap A_2\neq \emptyset\}.
\end{eqnarray*}
Then clearly $V = \cup_{j=1}^5S_j$. Moreover, we have the following:
\begin{claim}\label{col-c9bar-case2}
 $S_1, S_2, \ldots, S_5$ are stable sets.
\end{claim}
\begin{proof2}
By the definitions of $A$ and $B$, by \ref{clm:c9bar-aa},~\ref{clm:c9bar-bb} and \ref{clm:c9bar-ab}, and by \cref{clm:c9bar-ba+3}, it follows that $S_1,S_2,\ldots,S_5$ are stable sets. Since the proof is  similar to that of \cref{col-c9bar-case1},   we omit the details.
\end{proof2}

\medskip
By \cref{col-c9bar-case2}, we conclude that $\chi(G)\leq 5$, and hence $G$ is $5$-colorable.
This proves   \cref{thm:c9bar}.
\end{proof}

\begin{lemma}\label{thm:c7bar}
Every ($2K_2$, $K_3+K_1$, $C_5$, $K_5$, $\overline{C_9}$)-free graph is 5-colorable.
\end{lemma}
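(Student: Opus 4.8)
The plan is to first use the Strong Perfect Graph Theorem (\cref{thm:SPGT}) to isolate the single troublesome substructure, and then to imitate the neighbourhood-partition argument of \cref{thm:c9bar}. First I would argue that $G$ has no odd hole: an induced $C_5$ is forbidden by hypothesis, while every induced cycle $C_n$ with $n\ge 6$ contains two vertex-disjoint edges with no edge between them, i.e.\ an induced $2K_2$, which is also forbidden. Next I would check that the only odd antihole that can occur is $\overline{C_7}$: indeed $\overline{C_5}\cong C_5$ and $\overline{C_9}$ are forbidden, and for every odd $n\ge 11$ we have $\omega(\overline{C_n})=\alpha(C_n)=\frac{n-1}{2}\ge 5$, so $\overline{C_n}$ contains a $K_5$ and is excluded. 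Hence, by \cref{thm:SPGT}, if $G$ contains no $\overline{C_7}$ then $G$ is perfect and $\chi(G)=\omega(G)\le 4$. So it remains to colour the graphs that do contain an induced $\overline{C_7}$.

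For that case, let $Q=\{v_1,\dots,v_7\}$ induce a $\overline{C_7}$ with $v_iv_{i+1}\notin E$ for $i\in\langle 7\rangle$. Since $\overline{C_7}$ contains a triangle (for instance $\{v_1,v_3,v_5\}$) and every triangle is dominating by \ref{dom-k3}, $Q$ dominates $V\sm Q$, so every $u\in V\sm Q$ has a neighbour in $Q$. I would classify the vertices of $V\sm Q$ by their trace $N_Q(u)$, guided by two structural facts. First, $2K_2$-freeness forces each trace to be a union of arcs of length at least two along the $7$-cycle $v_1v_2\cdots v_7v_1$ of non-edges: if $uv_i\in E$ then, since $v_{i-1}v_{i+1}\in E$ while $v_i$ misses both, $u$ must be adjacent to $v_{i-1}$ or $v_{i+1}$. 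Second, $(K_3+K_1)$-freeness forbids $Q\sm N_Q(u)$ from containing an independent triple of the $7$-cycle, bounding how large a ``gap'' a trace may have. Together with the $C_5$- and $K_5$-constraints, these reduce $N_Q(u)$ to a short list of types playing the role of the sets $A_i,B_i$ in \cref{thm:c9bar}. One extra family appears here that is absent in the $\overline{C_9}$ setting, namely the set $D$ of vertices complete to $Q$; but $D$ is a stable set, since two adjacent vertices of $D$ together with a triangle of $Q$ would form a $K_5$.

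With the trace-types in hand, I would then establish the pairwise adjacency relations between the classes exactly as in the claims \ref{clm:c9bar-aa}--\ref{clm:c9bar-ab+3} of \cref{thm:c9bar}, each being a one-line consequence of one of the forbidden graphs $2K_2$, $K_3+K_1$, $C_5$, $K_5$, and then distribute $Q\cup D$ together with the trace classes into five stable sets, refining the borderline classes by the ``has / has no neighbour in a fixed class'' splitting used for $S_3,S_4,S_5$ in \cref{thm:c9bar}.

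The SPGT reduction is routine; the real work lies in the $\overline{C_7}$ case. The chief difficulty, and the point where this proof departs from \cref{thm:c9bar}, is the presence of the universal family $D$ (which cannot occur for $\overline{C_9}$, since there $\omega(\overline{C_9})=4$ already forces a $K_5$) combined with the tighter cyclic geometry of a $7$-cycle: I expect the bookkeeping of how $D$ and the arc-type classes interact, and the task of fitting everything into exactly five colour classes rather than six, to be the delicate part.
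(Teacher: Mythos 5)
Your SPGT reduction is fine as far as it goes, and is in fact slightly different from the paper's: with $C_5$ and $\overline{C_9}$ excluded by hypothesis, long odd holes killed by $2K_2$-freeness, and $\overline{C_{11}},\overline{C_{13}},\ldots$ killed by $K_5$-freeness, a $\overline{C_7}$-free graph in this class is indeed perfect, hence $4$-colorable. The genuine gap is in your second case. You split on the presence of an induced $\overline{C_7}$; the paper splits on the presence of $\overline{C_7}\vee K_1$, i.e.\ a $\overline{C_7}$ \emph{together with a vertex $w$ complete to it}, and this difference is exactly where your plan breaks. Every nontrivial anticompleteness claim in the paper's trace analysis is proved by completing a configuration to a $K_5$ or a $2K_2$ \emph{through $w$}: that $[B_i,B_{i+1}]$ is anticomplete (claim \ref{clm:c7bar-bb}), that $[A_i,B_{i+3}\cup B_{i-3}]$ is anticomplete (claim \ref{clm:c7bar-ab}), that $[D_i,D_{i+1}]$ is anticomplete (claim \ref{clm:c7bar-cc}), and that $D$ induces a bipartite graph (claim \ref{clm:c7bar-d}) all use $w$. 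This is forced by the tighter geometry you yourself point to: for adjacent $b\in B_i$ and $b'\in B_{i+1}$, the common neighbourhood in $Q$ is $\{v_{i+3},v_{i+4},v_{i+5}\}$, which spans a single edge, so $\{b,b',v_{i+3},v_{i+5}\}$ is only a $K_4$ --- one vertex short of a contradiction --- whereas in $\overline{C_9}$ the common trace of two such vertices contains a triangle, which is why the one-line $K_5$ arguments of \cref{thm:c9bar} need no helper vertex. So your claim that the class relations can be established ``exactly as in'' \cref{thm:c9bar} fails for a bare $\overline{C_7}$.

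Concretely, your proposal has no argument at all for graphs that contain a $\overline{C_7}$ but in which no vertex is complete to any $\overline{C_7}$ (your universal family empty). The paper covers precisely these graphs by a second, different application of SPGT that your proposal is missing: if $G$ is $(\overline{C_7}\vee K_1)$-free, then for any vertex $v$ the graph $G[N(v)]$ has no odd hole and no odd antihole (an induced $\overline{C_7}$ inside $N(v)$ would form a $\overline{C_7}\vee K_1$ with $v$), so it is a $K_4$-free perfect graph, hence $3$-colorable, and together with the bipartite $G[\overline{N}(v)\cup\{v\}]$ this gives $5$ colours. In other words, SPGT must be applied to neighbourhoods, not only to $G$ itself. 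Your remark that the universal class is stable is correct but misreads its role: when that class is nonempty, the vertex $w$ is not extra bookkeeping but the indispensable tool of the entire analysis, and when it is empty the trace analysis collapses and a different argument is required.
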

\begin{proof}
Let $G=(V,E)$ be a ($2K_2$, $K_3+K_1$, $C_5$, $K_5$, $\overline{C_9}$)-free graph. First suppose that $G$ is ($\overline{C_7}\vee K_1$)-free. Let $v\in V$. Then since $G$ is $K_5$-free, by \cref{thm:SPGT}, $G[N(v)]$ is a $K_4$-free perfect graph and therefore is $3$-colorable, and $G[\overline{N}(v)\cup\{v\}]$ is a bipartite graph and therefore is $2$-colorable. Hence $G$ is $5$-colorable. So we may assume that $G$ contains a  $\overline{C_7}\vee K_1$. Let $Q:=\{v_1,v_2,\ldots,v_7\}$ induces a $\overline{C_7}$ in $G$ such that $v_iv_{i+1}\notin E$ for $i\in \langle 7 \rangle$ and let $w$ be a vertex which is complete to $Q$. For each $i\in \langle 7 \rangle$, let:
\begin{eqnarray*}
 A_i&:=&\{u\in V\setminus Q \mid N(u)\cap Q=\{v_{i-1},v_i,v_{i+1}\}\},\\
B_i&:=&\{u\in V\setminus Q \mid N(u)\cap Q= Q\sm \{v_{i-1},v_i,v_{i+1}\}\},\\
D_i&:=&\{u\in V\setminus Q \mid N(u)\cap Q= Q\sm\{v_{i-3},v_{i+3}\}\}, \mbox{ and}\\
W&:=&\{u\in V\setminus Q \mid N(u)\cap Q=Q\}.
\end{eqnarray*}
Let $A:=\bigcup_{i=1}^7A_i$, $B:=\bigcup_{i=1}^7B_i$ and $D:=\bigcup_{i=1}^7D_i$. Clearly $w\in W$. Further:
\begin{enumerate}[label=(\arabic*),series=edu*]
\item\label{c7bar-part}
$V=Q\cup A\cup B\cup D\cup W$.

\begin{proof2}
We observe that  first four of the five properties listed in   Lemma~\ref{thm:c9bar}:\ref{c9bar:V-Q:prop} still hold. Using those properties, it is easily verified that $V=Q\cup A\cup B\cup D\cup W$. The proof is similar to that of Lemma~\ref{thm:c9bar}:\ref{clm:c9bar-part}, and we omit the details.
\end{proof2}
\end{enumerate}
Moreover, the following  properties hold, where $i\in \langle 7 \rangle$.

\begin{enumerate}[label=(\arabic*),resume=edu*]

\item\label{clm:c7bar-aa} $A_i$ is a stable set, and $[A_i, A_{i+1}\cup A_{i-1}]$ is  anti-complete.

\begin{proof2}
Suppose there are adjacent vertices, say, $a$ and $a'$, in one of the listed sets. If $a,a'\in A_i$, then \{$a,a',v_{i-2},v_{i+2}$\} induces a $2K_2$. If $a\in A_i$ and $a'\in A_{i+1}$, then \{$a,a',v_{i+3},v_{i-2}$\} induces a $2K_2$. 
\end{proof2}

\item\label{clm:c7bar-aa+3} One of $A_i$ and $A_{i+3}\cup A_{i-3}$ is empty.

\begin{proof2}
Suppose that there are vertices, say $a\in A_i$ and $a'\in A_{i+3}$. If $aa'\in E$, then \{$a,v_{i+1},v_{i-2},$ $v_{i+2},a'$\} induces a $C_5$. If $aa'\notin E$, then \{$a,v_{i+1},a',v_{i+2}$\} induces a $2K_2$.
\end{proof2}

\item\label{clm:c7bar-bb} $B_i$ is a stable set, and  $[B_i, B_{i+1}\cup B_{i-1}]$ is anti-complete.

\begin{proof2}
Suppose that there are adjacent vertices, say, $b$ and $b'$, in one of the listed sets. If $b,b'\in B_i$, then  \{$b,b',v_{i-1},v_{i+1}$\} induces a $2K_2$.  If $b\in B_i$ and $b'\in B_{i+1}$, then  \{$b,b',v_{i-2},v_{i+3},w$\} induces a $K_5$. 
\end{proof2}

\item\label{clm:c7bar-dx} $D_i$ and $W$ are stable sets.

\begin{proof2}
Suppose there are adjacent vertices, say, $x$ and $y$, in one of the listed sets.
      If $x,y\in D_i$, then \{$x,y,v_{i-2},v_i,v_{i+2}$\} induces a $K_5$. If $x,y\in W$, then \{$x,y,v_1,v_3,v_5$\} induces a $K_5$.
\end{proof2}

\item\label{clm:c7bar-d} $[W, A\cup B]$ is complete, and $[W, D]$ is anti-complete. Moreover, $D$ induces a bipartite graph.

\begin{proof2}
If there are nonadjacent vertices, say $x\in W$ and $a\in A_i$, then \{$x,v_{i-2},v_{i+2},a$\} induces a $K_3+K_1$. If there are nonadjacent vertices, say $x\in W$ and $b\in B_i$, then \{$x,v_{i-1},v_{i+1},b$\} induces a $K_3+K_1$. If there are adjacent vertices, say $x\in W$ and $d\in D_i$, then \{$x,d,v_{i-2},v_i,v_{i+2}$\} induces a $K_5$. This proves the first assertion of this claim. Since $W$ is anti-complete to $D$, we see that $D\subseteq\overline N(w)$ and therefore $G[D]$ is a bipartite graph.
\end{proof2}

\item\label{clm:c7bar-ab} $[A_i, B_{i+3}\cup B_{i-3}]$ is   anti-complete.

\begin{proof2}
 If there are adjacent vertices, say, $a\in A_i$ and $b\in B_{i+3}$, then \{$a,b,v_{i-1},v_{i+1},w$\} induces a $K_5$.
\end{proof2}

\item\label{clm:c7bar-ab+2} For any vertex in $a\in A_i$, one of $N(a)\cap B_{i+2}$ and $N(a)\cap B_{i-2}$ is empty.

\begin{proof2}
Suppose that there are vertices, say, $b\in N(a)\cap B_{i+2}$ and $b'\in N(a)\cap B_{i-2}$. Then since $\{b',v_{i+3},b,v_{i-3}\}$ does not induce a $2K_2$, we have $bb'\in E$. But then, by \ref{clm:c7bar-d}, \{$a,b,b',v_i,w$\} induces a $K_5$.
\end{proof2}

\item\label{clm:c7bar-cc} $[D_i, D_{i+2}\cup D_{i-2}\cup D_{i+3}\cup D_{i-3}]$ is complete, and $[D_i, D_{i+1}\cup D_{i-1}]$ is anti-complete.

\begin{proof2}
If there are nonadjacent vertices, say, $x\in D_i$ and $y\in D_{i+2}$, then \{$x,v_{i-2},y,v_{i-3}$\} induces a $2K_2$. If there are nonadjacent vertices, say, $x\in D_i$ and $y\in D_{i+3}$, then \{$x,v_{i-2},y,v_{i-3},v_{i-1}$\} induces a $C_5$. Finally, if there are adjacent vertices, say, $x\in D_i$ and $y\in D_{i+1}$, then, by \ref{clm:c7bar-d},  \{$x,y,w,v_{i-3}$\} induces a $2K_2$.
\end{proof2}

\item\label{clm:c7bar-ac} $[A_i, D_{i+1}\cup D_{i-1}]$ is anti-complete. Moreover if $A_i\neq\emptyset$, then one of $D_{i+1}$ and $D_{i-1}$ is empty.

\begin{proof2}
  Suppose there are adjacent vertices, say $a\in A_i$ and $x\in D_{i+1}$, then \{$a,x,v_{i-1},v_{i-2}$\} induces a $K_3+K_1$. This proves the first assertion of this claim. Suppose that there are vertices, say $a\in A_i$, $x\in D_{i+1}$, and $y\in D_{i-1}$. Then $ax,ay\notin E$, and by \ref{clm:c7bar-cc}, $xy\in E$. But then, by \ref{clm:c7bar-d}, \{$a,w,x,y$\} induces a $2K_2$.
\end{proof2}

\item\label{clm:c7bar-aad} $[A_i, A_{i+2}\cup A_{i-2}\cup D_{i+2}\cup D_{i-2}]$  is complete.

\begin{proof2}
  If there are nonadjacent vertices, say $a\in A_i$ and $a'\in A_{i+2}$, then \{$a,v_{i+1},a',v_{i+2},v_i$\} induces a $C_5$.
   If there are nonadjacent vertices, say $a\in A_i$ and $x\in D_{i+2}$, then \{$a,x,v_{i+2},v_{i-3}$\} induces a $K_3+K_1$.
   \end{proof2}

\item\label{clm:c7bar-bc} $[B_i, D_{i+3}\cup D_{i-3}]$ is anti-complete.

\begin{proof2}
 If there are adjacent vertices, say $b\in B_i$ and $d\in D_{i+3}$ are adjacent, then \{$b,d,v_{i-2},v_{i-1}$\} induces a $K_3+K_1$.
\end{proof2}

\end{enumerate}

By \ref{clm:c7bar-aa+3}, $A=A_i\cup A_{i+1}\cup A_{i+2}$ for some $i\in \langle 7 \rangle$. By symmetry, we may assume that $A=A_2\cup A_3\cup A_4$. Now we give a 5-coloring of $G-D$:
\begin{itemize}\itemsep=0pt
\item Color $\{v_5,v_6\}\cup A_2\cup B_5\cup B_6$ with color 1.

\item Color $\{v_1,v_7\}\cup A_4\cup B_1\cup B_7$ with color 2.

\item Color $\{v_2\}\cup B_2$ with color 3.

\item Color $\{v_4\}\cup B_4$ with color 4.

\item Color $W$ with color 5.

\item For each vertex in $A_3$, color it with color 1 if it has a neighbor in $B_1$, otherwise color it with color 2.

\item Any vertex in $\{v_3\}\cup B_3$ can be colored with color 3 or 4. The specific coloring of $\{v_3\}\cup B_3$ will be given later according to the coloring of $D$.
\end{itemize}
By the properties we proved above, each vertex set with the same color is a stable set. Note that $v_2$ and $v_4$, $v_1$ and $v_5$, $v_6$ and $v_7$, colors 1 and 2, colors 3 and 4, are in symmetric positions.

By \ref{clm:c7bar-d} and \ref{clm:c7bar-cc}, $D$ can be partitioned into two stable sets $S_1$ and $S_2$, such that each of the two sets is equal to $D_i\cup D_{i+1}$ for some $i$.

Now assume that one of $S_1$ and $S_2$ is equal to $D_5\cup D_6$, $D_6\cup D_7$ or $D_7\cup D_1$. By symmetry, we may assume that  $S_1$ is either $D_5\cup D_6$ or $D_6\cup D_7$. If $S_1=D_5\cup D_6$, then color $S_1$ with color 3, $S_2$ with color 5, and $\{v_3\}\cup B_3$ with color 4. If $S_1=D_6\cup D_7$, then color $D_6$ with color 3, $D_7$ with color 4, $S_2$ with color 5, and each vertex in $\{v_3\}\cup B_3$ with any color of 3 or 4. Hence, by above properties, $G$ is $5$-colorable.

So we consider the opposite case: $D$ is $D_2\cup D_3\cup D_4\cup D_5$ or $D_1\cup D_2\cup D_4\cup D_5$ or $D_1\cup D_2\cup D_3\cup D_4$, and $D_2,D_4\neq\emptyset$. By \ref{clm:c7bar-ac}, $A_3=\emptyset$. If $A_2,A_4\neq\emptyset$, by  \ref{clm:c7bar-cc},  \ref{clm:c7bar-ac} and \ref{clm:c7bar-aad}, we see that $A_2\cup A_4\cup D_2\cup D_4\cup\{v_3\}$ contains a $K_5$. So one of $A_2$ and $A_4$, say $A_2$, is empty. If $A_4=\emptyset$, by symmetry we may assume that $S_1=D_1\cup D_2$. If $A_4\neq\emptyset$, by \ref{clm:c7bar-ac}, one of $D_3$ and $D_5$ is empty, then we may assume that $S_1=D_1\cup D_2$. Since $S_1=D_1\cup D_2$, we may color $D_1$ with color 4, $D_2$ with color 1, $S_2$ with color 5, and $\{v_3\}\cup B_3$ with color 3. So, again by above properties, we conclude that $G$ is $5$-colorable. This completes the proof of  \cref{thm:c7bar}.
\end{proof}

\subsection{Proof of Lemma \ref{thm:ghasc5}}\label{ssc:ghasc5}

Let $G=(V,E)$ be a ($2K_2$, $K_3+K_1$, $C_5+K_1$, $K_6$)-free graph, and let $Q:=\{v_1,v_2,\ldots,v_5\}$ induces a $C_5$ in $G$ such that $v_iv_{i+1}\in E$ for $i\in \langle 5 \rangle $. Then there are five non-empty and pairwise disjoint vertex-sets $A_1,A_2,\ldots, A_5$ in $G$ such that, for each $i\in \langle 5 \rangle $, the set $A_i$ is complete to $A_{i-1}\cup A_{i+1}$, and anti-complete to $A_{i-2}\cup A_{i+2}$.  Let $A:=A_1\cup\cdots\cup A_5$.  We choose
these sets so that $A$ is maximal, and let $v_i\in A_i$.  For each $i\in \langle 5 \rangle $, let
 $B_i$ denote the set $\{u\in V\setminus A \mid u $ $\mbox{has a neighbor in each of } A_i, A_{i-2} \mbox{ and } A_{i+2}, \mbox{  and  anti-}$ $\mbox{complete to } A_{i-1}\cup A_{i+1}\}$, and let $D$ denote the set $\{u\in V\setminus A \mid u \mbox{ has a neighbor in } A_i, \mbox{ for all } i\in \langle 5 \rangle \}$.  Let $B:=\bigcup_{i=1}^5B_i$. Then, we observe that the following properties hold, where $i\in \langle 5 \rangle $:

\begin{enumerate}[label=(\arabic*),series=edu]

 \item\label{c5:part} $V=A\cup B\cup D$.

 \begin{proof2}
  Consider any $x\in V\setminus (A\cup B\cup D)$.  For each $i\in \langle 5 \rangle $, let
$a_i$ be a neighbor of $x$ in $A_i$ (if such a vertex exists) and
$b_i$ be a non-neighbor of $x$ in $A_i$ (if such a vertex exists).  Let
$Y:=\{i\mid a_i$ exists$\}$.   Then since $G$ is $C_5+K_1$-free, by \ref{dom-C5}, $x$ has a neighbor in $A$, and hence $Y\neq\emptyset$. Also, since $x\notin D$,  we have $1\leq |Y|\leq 4$. Further, since $x\notin B$, we conclude that $Y\neq \{i-2,i,i+2\}$ for any $i\in \langle 5 \rangle $. Now up to symmetry, $Y$ is one of the following:

\vspace{-0.25cm}
\begin{itemize}\itemsep=0pt
\item $Y=\{i\}$ or $\{i,i+1\}$ or $\{i-1, i,i+1\}$ for some $i\in \langle 5 \rangle $.  Then $\{x,a_i,b_{i-2},b_{i+2}\}$ induces a $2K_2$.
\item $Y=\{i-1,i+1\}$ for some $i\in \langle 5 \rangle $. Then $x$ is complete to $A_{i-1}\cup A_{i+1}$, for otherwise $\{b_{i-1},b_{i-2},x,a_{i+1}\}$ or $\{b_{i+1},b_{i+2},x,a_{i-1}\}$ induces a $2K_2$. So $x$ can be added to $A_i$, contradicting the maximality of $A$.
\item $Y=\{i-1, i,i+1,i+2\}$ for some $i\in \langle 5 \rangle $. Then $\{x,a_{i},a_{i+1},b_{i-2}\}$ induces
$K_3+K_1$.
\end{itemize}

\vspace{-0.25cm}
So \ref{c5:part} holds.
\end{proof2}

\item\label{c5:a-stable} $A_i$ is a stable set.

\begin{proof2}
If there are adjacent vertices, say $a,a'\in A_i$, then $\{a,a',v_{i+2},v_{i-2}\}$ induces a $2K_2$.
\end{proof2}

\item\label{c5:daibi} Any vertex in $B_i\cup D$ is complete to $A_i$. So, $D$ is complete to $A$.

\begin{proof2}
Suppose there are nonadjacent vertices, say $x\in B_i\cup D$ and $a\in A_i$. Then pick a neighbor of $x$ in $A_{i+2}$, say $a'$, and pick a neighbor of $x$ in $A_{i-2}$, say $a''$. Then $\{x,a',a'',a\}$ induces a $K_3+K_1$.
\end{proof2}

\item\label{c5:biai+2ai-2} Any vertex in $B_i$ is complete to either $A_{i+2}$ or $A_{i-2}$.

\begin{proof2}If there is a vertex $b\in B_i$ which has non-neighbors, say $a\in A_{i+2}$ and $a'\in A_{i-2}$, then, by \ref{c5:daibi}, $\{x,v_i,a,a'\}$ induces a $2K_2$.
\end{proof2}

\item\label{clm:c5-dk4} $G[D]$ is $K_4$-free.

\begin{proof2}
If $G[D]$ contains a $K_4$, say $K$, then, by \ref{c5:daibi}, $V(K)\cup\{v_1,v_2\}$ induces a $K_6$.
\end{proof2}

\item\label{clm:c5-bstable}  $B_i$ is a stable set.

\begin{proof2}
Suppose that there are adjacent vertices in $B_i$, say  $b$ and $b'$. By \ref{c5:biai+2ai-2}, we may assume that $b$ is complete to $A_{i+2}$. Now pick a neighbor of $b'$ in $A_{i+2}$, say $a$. Then $ba,b'a\in E$, and then \{$b,b',a,v_{i-1}$\} induces a $K_3+K_1$.
\end{proof2}

\item\label{c5-ab-stable}  $A_{i-1}\cup A_{i+1}\cup B_i$ is a stable set.

\begin{proof2}
This follows from the definitions of $A$ and $B$, and by \ref{c5:a-stable} and \ref{clm:c5-bstable}.
\end{proof2}

\item\label{clm:c5-bb+1} $B_i$ is complete to $B_{i+1}\cup B_{i-1}$.

\begin{proof2}
Suppose that there are nonadjacent vertices, say $b\in B_i$ and $b'\in B_{i+1}$.  Then pick a neighbor of $b$ in $A_{i+2}$, say $a$, and a neighbor of $b'$ in $A_{i-1}$, say $a'$. Now $\{b,a,b',a'\}$ induces a $2K_2$.
\end{proof2}

\item\label{clm:c5-bda+2} Any vertex in $B_i$ is complete to either   $D$ or   $A_{i+2}\cup A_{i-2}$.

\begin{proof2}
Let $b\in B_i$. Suppose that there are vertices, say $d\in D$ and $a\in A_{i+2}$ such that $bd,ba\notin E$. Then \{$b,d,a,v_{i+1}$\} induces a $K_3+K_1$.
\end{proof2}

\item\label{clm:c5-bnd} For any vertex $b\in B_i$,  $D\sm N(b)$ is a stable set.

\begin{proof2}
Suppose that there are vertices, say $d_1,d_2\in D\sm N(b)$ such that $d_1d_2\in E$. Then, $bd_1,bd_2\notin E$, and then \{$b,d_1,d_2,v_{i+1}$\} induces a $K_3+K_1$.
\end{proof2}

\item\label{clm:c5-bdk3} Suppose that $G[D]$ contains a $K_3$, say $K$. Then any vertex in $B_i$ is adjacent to exactly two vertices in $K$.  Moreover, $B_i$ is complete to $A_{i+2}\cup A_{i-2}$.

\begin{proof2} Let $V(K):=\{d_1,d_2,d_3\}$, and let $b\in B_i$.
By \ref{clm:c5-bnd}, $b$ is adjacent to at least two of $d_1,d_2,d_3$. Suppose that $b$ is adjacent to all of $d_1,d_2,d_3$. Now pick a neighbor of $b$ in $A_{i+2}$, say $a$, and in $A_{i-2}$, say  $a'$.  Then \{$b,d_1,d_2,d_3,a,a'$\} induces a $K_6$. This proves the first assertion of this item. The second assertion of this item follows from the first assertion and from \ref{clm:c5-bda+2}.
\end{proof2}

\item\label{c5:dbk3-free} For any vertex $b\in B_i$, $G[N(b)\cap D]$ is  $K_3$-free. So, $G[B_i\cup D]$ is $K_4$-free.

\begin{proof2}
This first assertion follows from \ref{clm:c5-bdk3}.  If $G[B_i\cup D]$ contains a $K_4$, then, by \ref{clm:c5-dk4} and \ref{clm:c5-bstable}, the $K_4$ has three vertices in $D$ and one vertex in $B_i$, which contradicts the first assertion.
\end{proof2}

\item\label{clm:c5-dc7bar} If $B\neq\emptyset$, then $G[D]$ contains no antiholes on 7 or more vertices.

\begin{proof2}
Let $b\in B$. Suppose that $G[D]$ contains an antihole $\overline{C_n}$ ($n\geq 7$) with vertex-set, say   $R:=\{d_1,d_2,\ldots,d_n\}$ and such that $d_id_{i+1}\notin E$ for $i\in \{1,2,\ldots,n\}$ with all indices modulo $n$. By \ref{clm:c5-bnd}, we may assume that $D\sm (N(b)\cap R)\subseteq\{d_1,d_2\}$. Then \{$d_3,d_5,d_7$\} induces a $K_3$ in $N(b)\cap R$ $(\subseteq N(b)\cap D)$, a contradiction to \ref{c5:dbk3-free}.
\end{proof2}

\item\label{clm:c5-bb+2domi} If there are   vertices, say $b\in B_i$ and $b'\in B_{i+2}$ such that $bb'\in E$, then \{$b,b'$\} dominates $D$.

\begin{proof2}
If there is a vertex, say $d\in D$ such that $bd,b'd\notin E$, then \{$b,b',d,v_{i+1}$\} induces a $2K_2$.
\end{proof2}

\item\label{clm:c5-3or4colorable} Let $X\subseteq A\cup B$ such that $X\cap B_i$ and $X\cap B_{i+2}$ are anti-complete for some $i\in \langle 5 \rangle $, then $G[X]$ is 4-colorable. Moreover if $X\cap B_{i+1}=\emptyset$, then $G[X]$ is 3-colorable.

\begin{proof2}
Since $A_{i+1}\cup (X\cap (B_i\cup B_{i+2}))$, $A_{i+2}\cup A_{i+4}\cup B_{i+3}$, $A_i\cup A_{i+3}\cup B_{i+4}$ and $B_{i+1}$ are four stable sets (by \ref{c5-ab-stable}), $G[X]$ is $4$-colorable, and is $3$-colorable if $B_{i+1}=\emptyset$.
\end{proof2}

\end{enumerate}
Now we proceed to prove the lemma.
First if $D=\es$, then, by \ref{c5-ab-stable}, for each $i\in \langle 5 \rangle $, $B_i\cup A_{i+1}$ is a stable set (by \ref{c5-ab-stable}), and hence $G$ is $5$-colorable. So we may assume that $D\neq \es$. Next, suppose that
there is an index $i\in \langle 5 \rangle $ such that $[B_i,B_{i+2}]$ is anti-complete, say $i=1$. Since $G[B_2\cup D]$ is $K_4$-free (by \ref{c5:dbk3-free}) and therefore is 4-colorable, by \cref{thm:2k2-k4}. Also, by \ref{clm:c5-3or4colorable}, $G[A\cup(B\setminus B_2)]$ is 3-colorable. So $\chi(G)\leq \chi(G[A\cup(B\setminus B_2)])+\chi(G[B_2\cup D])\leq 7$, and hence $G$ is $7$-colorable.

So we may assume that $[B_i, B_{i+2}]$ is not anti-complete for each $i\in \langle 5 \rangle $. More precisely, $[B_i, B_{i+2}]\neq \es$.  Then, by \ref{clm:c5-dc7bar}, $G[D]$ contains no antiholes on 7 or more vertices. To prove the lemma, we consider three cases: $G[D]$ contains a  $C_5$, $G[D]$ is $C_5$-free and contains a $\overline{C_6}$, and $G[D]$ is ($C_5$, $\overline{C_6}$)-free, and prove that $G$ is 7-colorable in each case.

\begin{case}\label{C5-case1} $G[D]$ contains a $C_5$.
\end{case}
Let $Q':=\{v'_1,\ldots, v'_5\}$ induces a $C_5$ in $D$, such that $v'_iv'_{i+1}\in E$ for $i\in \langle 5 \rangle $. Then, as earlier, there are five non-empty and pairwise disjoint vertex-sets $A'_1,A'_2,\ldots, A'_5$ in $G$ such that, for each $i\in \langle 5 \rangle $, the set $A'_i$ is complete to $A'_{i-1}\cup A'_{i+1}$, and anti-complete to $A'_{i-2}\cup A'_{i+2}$.  Let $A':=A'_1\cup\cdots\cup A'_5$.  We choose these sets so that $A'$ is maximal, and let $v'_i\in A'_i$.  For each $i\in \langle 5 \rangle $, let
 $B'_i$ denote the set $\{u\in V\setminus A' \mid u $ $\mbox{has a neighbor in each of } A'_i, A'_{i-2} \mbox{ and } A'_{i+2}, \mbox{  and  anti-complete to } A'_{i-1}\cup A'_{i+1}\}$, and let $D'$ denote the set $\{u\in V\setminus A' \mid u \mbox{ has a neighbor in } A'_i, \mbox{ for all } i\in \langle 5 \rangle \}$.  Let $B':=\bigcup_{i=1}^5B'_i$.  Then, as in \ref{c5:part}, $V=A'\cup B'\cup D'$, and  the above properties   hold in $G$ with $A'$, $B'$ and $D'$ in place of $A, B$ and $D$, respectively.
Moreover, the following hold.

\begin{enumerate}[label=(\roman*), series=edu*]

\item\label{clm:c5-c5-a'dad'} $A'\subseteq D$ and $A\subseteq D'$.

\begin{proof2}
Suppose that $u\in A'_i\setminus D$. Then $uv'_{i+2},uv'_{i-2}\notin E$. If $u\in A$, then $u$ is complete to $D$, a contradiction. If $u\in B$, then it contradicts \ref{clm:c5-bnd}. This proves the first part of this claim. Then $A$ is complete to $A'$ and therefore is included in $D'$.
\end{proof2}

\item\label{clm:c5-c5-dd'} $D\cap D'=\emptyset$.

\begin{proof2}
Suppose that  there is a vertex $d\in D\cap D'$. Then for each $i\in \langle 5 \rangle $, $dv_i'\in E$, and so $\{d,v_i',v_{i+1}'\}$ induces a $K_3$. Then, by \ref{clm:c5-bdk3}, for each $b\in B$, $N(b)\cap (Q'\cup\{d\})=Q'$, which contradicts \ref{clm:c5-bb+2domi} and our assumption that $[B_i, B_{i+2}]\neq \es$ for each $i\in \langle 5 \rangle $.
\end{proof2}

\item\label{clm:c5-c5-db'nonempty} If $D\cap B'\neq\emptyset$, then $D'\cap B$ is a stable set.

\begin{proof2}
Suppose that there are vertices, say $d\in D\cap B_i'$ and $b,b'\in D'\cap B$ such that $bb'\in E$. Pick a neighbor  of $d$ in  $A_{i+2}'$, say $a$,  and  a neighbor  of $d$ in $A_{i-2}'$, say $a'$. Then, by \ref{clm:c5-c5-a'dad'}, $a,a'\in D$. Since \{$a,a',d$\} induces a $K_3$ in $D$, by \ref{clm:c5-bdk3}, we have $bd,b'd\notin E$. But, then \{$b,b',v_{i+1}',d$\} induces a $K_3+K_1$.
\end{proof2}

\item\label{clm:c5-c5-bbcompdb} $B\cap B_i'$ and $D\cap(B'\setminus B_i')$ are complete. Moreover, suppose that $B\cap B_i'\neq\emptyset$. If $D\cap B_{i+2}'\neq \emptyset$ (resp. $D\cap B_{i-2}'\neq \emptyset$), then $D\cap B_{i+1}'$ and $D\cap B_{i+3}'$ (resp. $D\cap B_{i-1}'$ and $D\cap B_{i-3}'$) are anti-complete.

\begin{proof2}
Suppose that there are nonadjacent vertices, say $b\in B\cap B_i'$ and $d\in D\cap(B'\setminus B_i')$. By symmetry assume that $d\in D\cap B_{i+1}'$ or $D\cap B_{i+2}'$. Then $d$ has a neighbor $a$ in $A_{i-1}'$, and $a\in D$ by \ref{clm:c5-c5-a'dad'}. Since $ab\notin E$, $\{a,d\}$ induces a $K_2$ in $G[D\setminus N(b)]$, which contradicts \ref{clm:c5-bnd}. This proves the first part of this claim. Suppose that $B\cap B_i'\neq\emptyset$. Then by the first part and \ref{clm:c5-bdk3}, $G[D\cap(B'\setminus B_i')]$ is $K_3$-free. Suppose that there are vertices, say $d_1\in B_{i+1}$, $d_2\in B_{i+2}$ and $d_3\in B_{i+3}$ such that $d_1d_3\in E$. Then $d_1d_2,d_2d_3\in E$ by \ref{clm:c5-bb+1}, and so $\{d_1,d_2,d_3\}$ induces a $K_3$ in $G[D\cap(B'\setminus B_i')]$, a contradiction.
\end{proof2}

\end{enumerate}
To prove the lemma in this case, we consider two cases:

\medskip
\no{\bf Case 1.1}~ {\em $D\cap B'\neq\emptyset$ and $D'\cap B\neq\emptyset$.}

\smallskip
\no Then $D\cap B'$ and $D'\cap B$ are stable sets (by \ref{clm:c5-c5-db'nonempty}). By \ref{clm:c5-bb+1}, $D'\cap B\subseteq B_i\cup B_{i+2}$ for some $i$. Assume that $D'\cap B\subseteq B_1\cup B_3$. Let $X:=D\cup B_4\cup B_5$. Then $X\subseteq A'\cup B'$ by \ref{clm:c5-c5-dd'}. By  \ref{clm:c5-bstable} and \ref{clm:c5-bb+1}, and by symmetry, assume that $B_4\subseteq B_1'\cup B_3'$ and $B_5\subseteq B_2'\cup B_4'$. If $D\cap B_5'=\emptyset$, then $X\cap B_5'=\emptyset$, and then by \ref{clm:c5-3or4colorable}, $G[X]$ is $4$-colorable. If $D\cap B_5'\neq\emptyset$, then, by \ref{clm:c5-bstable} and \ref{clm:c5-bb+1}, $D\cap B'$ is a subset of $B_5'\cup B_2'$ or $B_5'\cup B_3'$. By symmetry let $D\cap B'\subseteq B_5'\cup B_2'$. Then $X\cap (B_1'\cup B_3')\subseteq B_4$, and so $X\cap B_1'$ and $X\cap B_3'$ are anti-complete by \ref{clm:c5-bstable}.  By \ref{clm:c5-3or4colorable}, $G[X]$ is 4-colorable, and $G-X=G[A\cup B_1\cup B_2\cup B_3]$ is $3$-colorable, and hence  $G$ is 7-colorable.

\medskip
\no{\bf Case 1.2}~ {\em One of $D\cap B'$ and $D'\cap B$ is empty.}

\smallskip
\no By symmetry let $D'\cap B=\emptyset$. Then $B\cup D\subseteq A'\cup B'$ by \ref{clm:c5-c5-dd'}. First we suppose that $D\cap B_i'=\emptyset$ for some $i\in \langle 5 \rangle $. Since $B_i'$ is a stable set, by \ref{clm:c5-bstable} and \ref{clm:c5-bb+1}, $B_i'\subseteq B_j\cup B_{j+2}$ for some $j\in \langle 5 \rangle $. Then $(B_{j-2}\cup B_{j-1})\cap B_i'=\emptyset$, and then $(D\cup B_{j-2}\cup B_{j-1})\cap B_i'=\emptyset$. By \ref{clm:c5-3or4colorable}, $G[D\cup B_{j-2}\cup B_{j-1}]$ is $4$-colorable, and $G-(D\cup B_{j-2}\cup B_{j-1})=G[A\cup B_j\cup B_{j+1}\cup B_{j+2}]$ is $3$-colorable, $G$ is $7$-colorable.

So we may assume that $D\cap B_i'\neq\emptyset$, for each $i\in \langle 5 \rangle $. Let:
\begin{eqnarray*}
{\mathbb F}_1&:=&\{X\in\{B_1,\ldots,B_5\}\mid X\subseteq B_i', \mbox{ for some } i\in \langle 5 \rangle \}, \mbox{ and}\\
{\mathbb F}_2&:=&\{X\in\{B_1,\ldots,B_5\}\mid X\nsubseteq B_i',  \mbox{ for any } i\in \langle 5 \rangle \}.
\end{eqnarray*}
Then there exists some $i\in \langle 5 \rangle $ such that $B_i,B_{i+1}\in {\mathbb F}_1$ or $B_i,B_{i+1}\in {\mathbb F}_2$, say $i=1$. Let $X:=D\cup B_1\cup B_2$. First suppose that $B_1,B_2\in {\mathbb F}_2$. Then, up to symmetry, let $B_1\subseteq B_1'\cup B_3'$ and $B_2\subseteq B_2'\cup B_4'$. Since $B\cap B_2'\neq\emptyset$, by \ref{clm:c5-c5-bbcompdb}, $D\cap B_1'$ and $D\cap B_3'$ are anti-complete. There exist vertices, say $b_1\in B_1\cap B_1'$, $b_2\in B_1\cap B_3'$, $d_1\in D\cap B_1'$ and $d_2\in D\cap B_3'$ such that $d_1d_2\notin E$. By \ref{clm:c5-bstable}, $b_1b_2,b_1d_1,b_2d_2\notin E$. By \ref{clm:c5-c5-bbcompdb}, $b_1d_2,b_2d_1\in E$. Then $\{b_1,b_2,d_1,d_2\}$ induces a $2K_2$.
So, we may assume that $B_1,B_2\in {\mathbb F}_1$. By symmetry let $B_1\subseteq B_1'$, and $B_2\subseteq B_2'$ or $B_3'$. Since each $D\cap B_i'$ is nonempty, by \ref{clm:c5-c5-bbcompdb}, $D\cap B_2'$ and $D\cap B_4'$, $D\cap B_3'$ and $D\cap B_5'$ are anti-complete. If $B_2\subseteq B_2'$, then $X\cap B_3'$ and $X\cap B_5'$ are anti-complete. Likewise, if $B_2\subseteq B_3'$, then $X\cap B_2'$ and $X\cap B_4'$ are anti-complete.
By \ref{clm:c5-3or4colorable}, $G[X]$ is 4-colorable , while $G-X=G[A\cup B_3\cup B_4\cup B_5]$ is 3-colorable. So $G$ is 7-colorable.

This completes the proof of \cref{thm:ghasc5} in \cref{C5-case1}. $\Diamond$

\begin{case}\label{C5-case2}
$G[D]$ is $C_5$-free and contains a $\overline{C_6}$.\end{case}
Suppose that $G[D]$ contains a $\overline{C_6}$ with vertex-set $R:=\{r_1,r_2, \ldots,r_6\}$ such that $r_jr_{j+1}\notin E$, $j\in \langle 6 \rangle $. For $j\in \langle 6 \rangle $, let:
\begin{eqnarray*}
S_j&:=&\{u\in D\setminus R\mid N(u)\cap R=R\sm \{r_{j-1},r_{j-2}\}\}, \mbox{ and }\\
T_j&:=&\{u\in D\setminus R\mid N(u)\cap R=\{r_{j-1},r_j,r_{j+1}\}\}.
\end{eqnarray*}
Let $S:=\cup_{j=1}^6S_j$  and $T:=\cup_{j=1}^6T_j$. Note that, by \ref{clm:c5-bnd} and \ref{clm:c5-bdk3}, every vertex in $B$ belongs to some $W_j$, where $j\in \langle 6 \rangle $ as defined below:
$$W_j:=\{b\in B\mid N(u)\cap R=R\sm \{r_{j-1},r_{j-2}\}\}.$$
So $B=\cup_{j=1}^6W_j$. Recall that, by \ref{clm:c5-dk4}, $G[D]$ is ($K_4,C_5$)-free. Moreover, we see that the following properties hold, where $j\in \langle 6 \rangle $.

\begin{enumerate} [label=(\roman*)]
\item\label{c5frc6bar-Dpart} $D=R\cup S\cup T$.

\begin{proof2}
 Suppose that there is a vertex $u\in D\sm (R\cup S\cup T)$.  First assume that there is an index $k\in \langle 6 \rangle $ such that $\{r_k,r_{k+1}, r_{k+2}\}\subseteq N(u)\cap R$. Then since $\{u, r_k,r_{k+2}, r_{k-2}\}$ does not induce a $K_4$ in $G[D]$, we have $ur_{k-2}\notin E$, and then since $\{r_{k-1},r_{k-2},r_{k+3},u\}$ does not induce a $K_3+K_1$, one of $ur_{k-1}\notin E$ and $ur_{k+2}\notin E$. Thus, we observe that $u\in S\cup T$, a contradiction. So we may assume that $u$ is not complete to any three consecutive vertices of $R$.
By \ref{dom-k3}, $u$ has a neighbor in $R$, say $r_j$. Then since $\{u,r_j, r_{j-1}, r_{j+1}\}$ does not induce a $2K_2$, we may assume that $ur_{j+1}\in E$. By our assumption, $ur_{j-1}, ur_{j+2}\notin E$. Now $\{u,r_j,r_{j+2},r_{j-1},r_{j+1}\}$ induces a $C_5$ in $G[D]$, a contradiction. This proves \ref{c5frc6bar-Dpart}.
\end{proof2}

\item\label{c5-c6bar-wstable} Each of $S_j$, $T_j$ and $W_j$ is a stable set.

\begin{proof2}
Suppose that there are adjacent vertices, say $x$ and $y$ in one of the stated lists. Now, if $x,y\in S_j$ are adjacent, then $\{x,y,r_j,r_{j+2}\}$ induces a $K_4$ in $G[D]$. If $x,y\in T_j$, then $\{x,y,r_{j-1},r_{j+1}\}$ induces a $K_4$ in $G[D]$. Finally, if $x, y\in W_j$, then \{$x,y,r_j,r_{j-1}$\} induces a $K_3+K_1$.
\end{proof2}

\item\label{c5-c6bar-ww+2} $W_j$ and $W_{j+2}\cup W_{j-2}$ are complete. So, for each $i\in \langle 5 \rangle $, $B_i$ is a subset of $W_j\cup W_{j+1}$ or $W_j\cup W_{j+3}$ for some $j\in \langle 6 \rangle $.

\begin{proof2}
If there are nonadjacent vertices, say, $b\in W_j$ and $b_2\in W_{j+2}$, then \{$b,r_j,b',r_{j-1}$\} induces a $2K_2$.
This proves the first assertion. Since $B_i$ is a stable set (by \ref{clm:c5-bstable}), $B_i$ has vertices in at most one of $W_1,W_3,W_5$ and in at most one of $W_2,W_4,W_6$.
\end{proof2}

\item\label{c5-c6bar-bb+2} Suppose there are adjacent vertices, say $b\in B_i\cap W_j$ and $b'\in B_{i+2}$. Then $b'\in W_{j+2}\cup W_{j+3}\cup W_{j+4}$.

\begin{proof2}
By \ref{clm:c5-bb+2domi}, \{$b,b'$\} dominates $R$. So $R\setminus N(b)=\{r_{i-2},r_{i-1}\}\subseteq N(b')$.
\end{proof2}

\item\label{c5-c6bar-sw} Suppose that  $S_j\neq \es$. Then $B= W_j\cup W_{j+2}\cup W_{j+4}$. Moreover, $S_j$ is complete to $W_{j+2}\cup W_{j+4}$, and anti-complete to $W_j$.

\begin{proof2}
Let $s\in S_j$. Then note that each of \{$s,r_j,r_{j+2}$\}, \{$s,r_{j+1},r_{j+3}$\} and \{$s,r_j,r_{j+3}$\} induces a $K_3$ in $D$.
 If there is a vertex $b\in W_{j+1}$, then, by \ref{clm:c5-bdk3}, $bs\in E$, and so $b$ is complete to $\{ s,r_{j+1},r_{j+3}\}$, a contradiction to \ref{c5:dbk3-free}; so $W_{j+1}=\es$. Likewise, $W_{j-1}=\es$. Finally, if
there is a vertex $b\in W_{j+3}$, then, again, by \ref{clm:c5-bdk3},  $b$ is complete to $\{ s,r_{j},r_{j+3}\}$, a contradiction to \ref{c5:dbk3-free}; so $W_{j+3}=\es$. This proves $B= W_j\cup W_{j+2}\cup W_{j+4}$.
The second assertion follows from   \ref{clm:c5-bdk3} and \ref{c5:dbk3-free}.
\end{proof2}

\item\label{c5-c6bar-tw} $T_j$ is complete to $W_j\cup W_{j+1}\cup W_{j+2}\cup W_{j+3}$ and anti-complete to $W_{j-2}\cup W_{j-1}$.

\begin{proof2}
Since for any $t\in T_j$, $\{t,r_{j-1},r_{j+1}\}$ induces a $K_3$ in $G[D]$, by \ref{clm:c5-bdk3}, for any $b\in B$, $|N(b)\cap \{t,r_{j-1},r_{j+1}\}|=2$. Now the proof follows from the definition of $W_j$'s.
\end{proof2}

\item\label{c5-c6bar-tt+1}If there are adjacent vertices, say, $t\in T_j$ and $t'\in T_{j+1}$, then $B=W_j\cup W_{j+2}\cup W_{j+4}$.

\begin{proof2} If $W_{j-1}\neq \es$, then since $\{t, r_{j-1}, r_{j+1}\}$ and $\{t',r_{j}, r_{j+2}\}$ induce triangles in $G[D]$, by \ref{clm:c5-bdk3} and by the definition of $W_{j-1}$, \{$t,t'$\} is anti-complete to $W_{j-1}$, and then, for any $b\in W_{j-1}$, $\{t,t'\}\subseteq D\sm N(b)$, a contradiction to \ref{clm:c5-bnd}; so   $W_{j-1}=\emptyset$. Also, since \{$t_1,t_2,r_{j+1}$\} induces a $K_3$ in $D$ and is complete to $W_{j+3}$ (by \ref{c5-c6bar-tw}), we have $W_{j+3}=\emptyset$ (by \ref{clm:c5-bdk3}). Likewise, $W_{j+1}=\emptyset$.
\end{proof2}

\item\label{c5-c6bar-wd3c} $G[W_j\cup D]$ is 3-colorable.

\begin{proof2}
Suppose that $G[W_j\cup D]$ is not 3-colorable. Since $G[W_j\cup D]$ is $K_4$-free (by \ref{c5:dbk3-free}), $G[W_j\cup D]$ is imperfect, and hence by \cref{thm:SPGT}, $G[W_j\cup D]$ contains an odd hole or an odd anti-hole, say $C$. Since $G[D]$ is (odd hole, odd anti-hole)-free, $V(C)\cap W_j\neq \es$. By \ref{c5-c6bar-sw} and \ref{c5-c6bar-tw}, if $b$ and $b'$ are any two vertices in $W_j$, then $N(b)\cap D = N(b')\cap D$. So $C$ has exactly one vertex in $W_j$, which contradicts \ref{clm:c5-bnd}.
\end{proof2}
\end{enumerate}
Now we proceed to prove the lemma in this case. If $B_i\subseteq W_j$ for some $i\in \langle 5 \rangle $ and $j\in \langle 6 \rangle $, then $G[B_i\cup D]$ is 3-colorable (by \ref{c5-c6bar-wd3c}),  $G[A\cup(B\setminus B_i)]$ is 4-colorable (by using \ref{c5-ab-stable}), and hence $G$ is 7-colorable. So we may assume that $B_i\nsubseteq W_j$ for any $i\in \langle 5 \rangle $ and $j\in \langle 6 \rangle $. Then by \ref{c5-c6bar-ww+2} and \ref{c5-c6bar-sw}, $S=\emptyset$. By \ref{c5-c6bar-tt+1}, $T_j$ and $T_{j+1}$ are anti-complete for $j\in \langle 6 \rangle $. Clearly,  by \ref{c5-c6bar-wstable} and \ref{c5-c6bar-tw} and by the definitions of $T$ and $W_j$'s, $\{r_1,r_2\}\cup T_4\cup T_5$, $\{r_4,r_5\}\cup T_1\cup T_2\cup W_6$, $\{r_3\}\cup T_6\cup W_4\cup W_5$, and $\{r_6\}\cup T_3\cup W_1$  are four stable sets, and hence $H_1:=G[W_1\cup W_4\cup W_5\cup W_6\cup D]$ is $4$-colorable. Let $H_2:=G-H_1=G[A\cup W_2\cup W_3]$. Then $V(H_2)\cap B_i=\emptyset$ for some $i\in \langle 5 \rangle $. By \ref{c5-c6bar-bb+2}, $V(H_2)\cap B_{i-1}$ and $V(H_2)\cap B_{i+1}$ are anti-complete. By \ref{clm:c5-3or4colorable}, $H_2$ is 3-colorable. Then since $\chi(G)\leq \chi(H_1)+\chi(H_2) \leq 7$, $G$ is $7$-colorable.

 This completes the proof of \cref{thm:ghasc5} in \cref{C5-case2}. $\Diamond$

\begin{case}\label{c5-case3}
$G[D]$ is ($C_5$, $\overline{C_6}$)-free.\end{case}
First suppose that $G[D]$ is $K_3$-free. Then $G[D]$ is a bipartite graph therefore is 2-colorable.
 Also, by \ref{c5-ab-stable},  $A_{i+1}\cup B_i$ is a stable set, for each $i\in \langle 5 \rangle $, and hence $G[A\cup B]$ is 5-colorable. So  we conclude that $G$ is 7-colorable.  So suppose that $G[D]$ contains a $K_3$ with vertex-set, say $R:=\{r_1,r_2,r_3\}$. For $j\in \langle 3 \rangle $, let:
\begin{eqnarray*}
 S_j &:=& \{u\in D\setminus R \mid N(u)\cap R =\{r_j\}\}, \mbox{ and} \\
T_j &:=&\{u\in D\setminus R \mid N(u)\cap R =\{r_{j+1},r_{j-1}\}\}.
\end{eqnarray*}
Let $S:=S_1\cup S_2\cup S_3$ and $T:=T_1\cup T_2\cup T_3$. Then, since $G$ is $K_3+K_1$-free, every triangle in $G[D]$ is dominating; so  each $u\in D\setminus R$ has a neighbor in $R$, and since $G[D]$ is $K_4$-free (by \ref{clm:c5-dk4}), by \ref{nei-Kfree}, we conclude that $|N(u)\cap R|$ is either $1$ or $2$, and hence $D=R\cup S\cup T$. Also, by \ref{clm:c5-bdk3}, every vertex in $B$ belongs to some $W_j$ as defined below, where $j\in \langle 3 \rangle $:
$$W_j:=\{b\in B\mid N(u)\cap R=\{r_{j+1},r_{j-1}\}\}.$$
More precisely, $B=W_1\cup W_2\cup W_3$. Now we see that the following properties hold, where $j\in \langle 3 \rangle $.

\begin{enumerate}[label=(\roman*),series=edu]
 \item\label{c5-free-stable} Each of $S_j$ and $T_j$ is a stable set.

\begin{proof2}
Suppose that  there are adjacent vertices, say $x$ and $y$ in $G$. If $x,y\in S_i$ are adjacent, then \{$x,y,r_{j+1},r_{j-1}$\} induces a $2K_2$. If $x,y\in T_j$, then \{$x,y,r_{j+1},r_{j-1}$\} induces a $K_4$.
\end{proof2}

\item\label{c5-free-wbipartite} Every $W_j$ induces a bipartite graph. This implies that $W_j\cap B_i=\emptyset$ for some $i\in \langle 5 \rangle $.

\begin{proof2}
Clearly $W_j\subseteq\overline N(r_j)$. So,  by \ref{nonnei-bip}, $W_j$ induces a bipartite graph. This proves the first assertion. Suppose that there are vertices, say, $b_1,b_2,b_3,b_4$ and $b_5$ such that $b_i\in W_j\cap B_i$  for each $i\in \langle 5 \rangle$. Then, by \ref{clm:c5-bb+1}, $b_ib_{i+1}\in E$ for each $i\in \langle 5 \rangle$. If $b_ib_{i+2}\in E$ for some $i\in \langle 5 \rangle$, then $\{b_i,b_{i+1},b_{i+2}\}$ induces a $K_3$ in $G[W_j]$. If $b_ib_{i+2}\notin E$ for each $i\in \langle 5 \rangle$, then $\{b_1,b_2,b_3,b_4,b_5\}$ induces a $C_5$ in $G[W_j]$. Both cases contradicts the first assertion.
%
\end{proof2}

\item\label{c5-free-wbwb+2} For $i\in \langle 5 \rangle $, $W_j\cap B_i$ and $W_j\cap B_{i+2}$ are anti-complete. Moreover, $W_j\cap (B_i\cup B_{i+2})$ is a stable set.

\begin{proof2}
If there are adjacent vertices, say $b\in W_j\cap B_i$ and $b'\in W_j\cap B_{i+2}$, then $br_j,b'r_j\notin E$, which contradicts \ref{clm:c5-bb+2domi}. This proves the first assertion. The second assertion follows from the first assertion, and from \ref{clm:c5-bstable}.
\end{proof2}

\item\label{c5-free-stsw} $S_j$ and $T_j\cup W_j$ are complete.

\begin{proof2}
If there are nonadjacent vertices, say $s\in S_j$ and $t\in T_j\cup W_j$, then \{$t,r_{j+1},r_{j-1},s$\} induces a $K_3+K_1$.
\end{proof2}

\item\label{c5-free-tw} $T_j$ and $W_j$ are anti-complete.

\begin{proof2}
If there are adjacent vertices, say $t\in T_j$ and $b\in W_j$, then \{$t,r_{j+1},r_{j-1}$\} induces a $K_3$ in $N(b)\cap D$, which contradicts \ref{clm:c5-bdk3}.
\end{proof2}

\item\label{c5-free-semptystable}  Either $S_j$ is empty for some $j\in \langle 3 \rangle $, or $S$ is a stable set.

\begin{proof2}
Suppose that there are vertices, say $s_1\in S_1$, $s_2\in S_2$ and $s_3\in S_3$, and that $s_1s_2\in E$. If $s_1s_3,s_2s_3\notin E$, then \{$s_1,s_2,r_3,s_3$\} induces a $2K_2$. If $s_1s_3,s_2s_3\in E$, then \{$s_1,s_2,s_3,r_1,r_2,r_3$\} induces a $\overline{C_6}$. If $s_3$ is adjacent to exactly one of $s_1$ and $s_2$, say $s_1$, then \{$s_1,s_2,r_2,r_3,s_3$\} induces a $C_5$.
\end{proof2}

\item\label{c5-free-swcomplete} Suppose that $S_j\neq\emptyset$, for each $j\in \langle 3 \rangle $. If there are nonadjacent vertices, say $b\in W_{j+1}$ and $b'\in W_{j-1}$, then \{$b,b'$\} is complete to $S$.

\begin{proof2}
Since  $S_j\neq\emptyset$, for each $j\in \langle 3 \rangle $, there are vertices, say, $s_1\in S_{j+1}$, $s_2\in S_{j-1}$, and $s_3\in S_j$. By \ref{c5-free-stsw}, $s_1b,s_2b'\in E$. Then since \{$s_1,b,s_2,b'$\} does not induce a $2K_2$, we have either $s_1b'\in E$ or $s_2b\in E$.  By symmetry, assume that $s_1b'\in E$.  Then since \{$b',s_1,r_{j+1},s_3$\} does not induce a $K_3+K_1$, $s_3b'\in E$, and then since \{$s_3,b',b,r_{j-1}$\} does not induce a $2K_2$,  we have $s_3b\in E$. Finally, since \{$s_2,b,s_3,r_j$\} does not induce a $K_3+K_1$, we have $s_2b\in E$. This proves \ref{c5-free-swcomplete}.
\end{proof2}

\end{enumerate}
To prove the lemma in this case, we consider two cases:

\medskip
\no{\bf Case 3.1}~ {\em $S_j=\emptyset$, for some $j\in \langle 3 \rangle $.}

\smallskip
\no We may assume that $S_3=\emptyset$. By \ref{c5-free-wbipartite}, we may also assume that $W_1\cap B_1=\emptyset$. Let $H_1$ denote the induced subgraph $G[D\cup(W_1\cap(B_2\cup B_5))\cup(W_2\cap B_1)\cup W_3]$ and let $H_2$ denote the induced subgraph $G-V(H_1)= G[A\cup(W_1\cap(B_3\cup B_4))\cup(W_2\cap(B\setminus B_1))]$.

 Now we show that $\chi(H_1)\leq 4$. First, since $X_1:=T_3\cup W_3\cup S_1\cup S_2\subseteq \overline N(r_3)$, by \ref{nonnei-bip}, $X_1$ induces a bipartite graph. Next,   by \ref{c5-free-wbwb+2}, $W_1\cap(B_2\cup B_5)$ is a stable set, and hence, by \ref{c5-free-tw} and by the definitions of $T$ and $W,$  $X_2:= \{r_1\}\cup T_1\cup (W_1\cap(B_2\cup B_5))$ is a stable set. Likewise, $X_3:=\{r_2\}\cup T_2\cup (W_2\cap B_1)$ is a stable set. Then clearly, $V(H_1)=X_1\cup X_2\cup X_3$, and hence $\chi(H_1)\leq 4$.

 Next, we show that $\chi(H_2)\leq 3$. Since $V(H_2)\cap (B_2\cup B_5)\subseteq W_2$, by \ref{c5-free-wbwb+2}, $V(H_2)\cap B_2$ and $V(H_2)\cap B_5$ are anti-complete. Then since $V(H_2)\cap B_1=\emptyset$, by \ref{clm:c5-3or4colorable}, $\chi(H_2)\leq 3$.

 Thus, $\chi(G)\leq \chi(H_1)+\chi(H_2)\leq 7$, and hence $G$ is $7$-colorable.

\medskip
\no{\bf Case 3.2}~ {\em $S_j\neq \emptyset$, for each $j\in \langle 3 \rangle $.}

\smallskip

\no Recall that, by \ref{c5-free-semptystable},   $S$ is a stable set.

First suppose that $B_i\subseteq W_j$ for some $i\in \langle 5 \rangle $ and $j\in \langle 3 \rangle $. Let $B_1\subseteq W_3$. Let $H_1$ denote the induced subgraph $G[D\cup((W_1\cup W_2)\cap(B_2\cup B_5))\cup B_1]$ and let $H_2$ denote the induced subgraph  $G-H_1=G[A\cup(W_3\cap(B_2\cup B_5))\cup((W_1\cup W_2\cup W_3)\cap(B_3\cup B_4))]$.  Then, as in Case~3.1, since $B_1\subseteq W_3$,  we see that  $\{r_1\}\cup T_1\cup(W_1\cap(B_2\cup B_5))$, $\{r_2\}\cup T_2\cup(W_2\cap(B_2\cup B_5))$ and $\{r_3\}\cup T_3\cup B_1$ are three stable sets. Also, since $S$ is a stable set, we have $\chi(H_1)\leq 4$. Next, since $V(H_2)\cap B_2$ and $V(H_2)\cap B_5$ are anti-complete by \ref{c5-free-wbwb+2}, and since $V(H_2)\cap B_1=\emptyset$, by \ref{clm:c5-3or4colorable}, $\chi(H_2)\leq 3$. Then $\chi(G)\leq \chi(H_1)+\chi(H_2) \leq 7$, and hence $G$ is $7$-colorable.

So we may assume that $B_i\nsubseteq W_j$ for any $i\in \langle 5 \rangle $ and $j\in \langle 3 \rangle $. By \ref{c5-free-swcomplete} and \ref{clm:c5-bstable}, $B$ and $S$ are complete. Now if there are adjacent vertices, say, $b,b'\in W_j$, then for any $s\in S_{j+1}$, \{$b,b',s,r_j$\} induces a $K_3+K_1$; so $W_j$ is a stable set, for $j\in \langle 3 \rangle $. Hence for $j\in \langle 3 \rangle $, by \ref{c5-free-tw}, $\{r_j\}\cup T_j\cup W_j$ is a stable set.  Since $S$ is a stable set,  $G[D\cup B] = G[R\cup S\cup T\cup W_1\cup W_2\cup W_3]$ is 4-colorable. Then since $G[A]$ is 3-colorable, we conclude that $G$ is 7-colorable.

This completes the proof of \cref{thm:ghasc5} in \cref{c5-case3}, and hence the proof of  \cref{thm:ghasc5}. \hfill{$\Box$}

\bigskip
\no{\bf Acknowledgement}.  The third author would like to thank Christoph Brause for personally communicating his work \cite{BG21}.

{\small

}


\begin{thebibliography}{10}



\bibitem{Brandt} S.~Brandt. \newblock Triangle-free graphs and forbidden subgraphs.
\newblock {\em Discrete Applied Mathematics}, 120:25--33,  2002.

\bibitem{BG21}
C.~Brause and M.~Gei{\ss}er.
\newblock On graphs without an induced path on 5 vertices and without an induced dart or kite.
 \newblock{In: J.~Ne\v{s}et\v{r}il, G. Perarnau, J. Ru\'e and O. Serra (Eds). }
  \newblock {\em Extended Abstracts of EUROCOMB 2021.} Trends in Mathematics, Birkhäuser, Cham.
   14:311--317, 2021.



\bibitem{BGPS} H.~J.~Broersma, P.~A.~Golovach, D.~Paulusma, and J. Song.
\newblock Updating the complexity status of coloring graphs
without a fixed induced linear forest.
\newblock{\em Theoretical Computer Science}, 414:9--19, 2012.

\bibitem{CHuangM}
K.~Cameron, S.~Huang, and O.~Merkel.
\newblock An optimal $\chi$-bound for ($P_6$, diamond)-free graphs.
\newblock{\em Journal of Graph Theory}, 97(3):451--465, 2021.

\bibitem{CKS-2}
S.~A.~Choudum, T.~Karthick,  and M.~A.~Shalu.
\newblock  Linear chromatic bounds
for a subfamily of $3K_1$-free graphs.
\newblock{\em Graphs and Combinatorics}, 24:413--428, 2008.

\bibitem{CKMM18}
M.~Chudnovsky, T.~Karthick, P.~Maceli, and F.~Maffray.
\newblock Coloring graphs with no induced five-vertex path or gem.
\newblock {\em Journal of Graph Theory}, 95(4):527--542, 2020.

\bibitem{CRST06}
M.~Chudnovsky, N.~Robertson, P.~Seymour, and R.~Thomas.
\newblock The strong perfect graph theorem.
\newblock {\em Annals of Mathematics}, 164:51--229, 2006.

\bibitem{CSRT-K4-free}
M.~Chudnovsky, P.~Seymour, N.~Robertson,  and R.~Thomas.
 \newblock $K_4$-free graphs with no odd holes.
 \newblock{\em Journal of Combinatorial Theory, Series B}, 100:313--331, 2010.

 \bibitem{ELMM}
L.~Esperet, L.~Lemoine, F.~Maffray, and M.~Morel.
\newblock The chromatic number of \{$P_5, K_4$\}-free graphs.
\newblock{\em Discrete Mathematics}, 313:743--754, 2013.

\bibitem{FXYY}
G.~Fan, B.~Xu, T.~Ye, and X.~Yu.
\newblock  Forbidden subgraphs and $3$-colorings.
\newblock{\em SIAM Journal on Discrete Mathematics}, 28:1226--1256, 2014.

\bibitem{FGMT95}
J.~L.~Fouquet, V.~Giakoumakis, F.~Maire and H.~Thuillier.
\newblock On graphs without $P_5$ and $\overline{P_5}$.
\newblock {\em Discrete Mathematics}, 146:33--44, 1995.

\bibitem{GHM}
S.~Gravier, C.~T.~Ho\`ang, and F.~Maffray.
\newblock Coloring the hypergraph of
maximal cliques of a graph with no long path.
\newblock{\em Discrete Mathematics}, 272:285--290, 2003.

\bibitem{GH19}
S.~Gaspers and S.~Huang.
\newblock (2$P_2$,$K_4$)-free graphs are 4-colorable.
\newblock {\em SIAM Journal on Discrete Mathematics}, 33:1095--1120, 2019.



\bibitem{GHJM}J.~Goedgebeur, S.~Huang, Y.~Ju and O.~Merkel.
\newblock Colouring graphs with
no induced six-vertex path or diamond.
\newblock Proceedings of The 27th International Computing and Combinatorics Conference (COCOON 2021).
\newblock{\em Lecture Notes in Computer
Science} 13025:319--329, 2021.

\bibitem{Gy87}
A.~Gy{\'a}rf{\'a}s.
\newblock Problems from the world surrounding perfect graphs.
\newblock {\em Zastosowania Matematyki Applicationes Mathematicae}, 19(3-4):413--441, 1987.



\bibitem{KMa182}
T.~Karthick and F.~Maffray.
\newblock Coloring (gem, co-gem)-free graphs.
\newblock {\em Journal of Graph Theory}, 89(3):288--303, 2018.


\bibitem{KMa18}
T.~Karthick and F.~Maffray.
\newblock Square-free graphs with no six-vertex induced path.
\newblock {\em SIAM Journal on Discrete Mathematics}, 33(2):874--909, 2019.

\bibitem{Mycielski}
J.~Mycielski.
\newblock Sur le coloriage des graphes.
\newblock{\em Colloquium Mathematics}, 3:161--162, 1955.

\bibitem{Pyatkin} A.~V.~Pyatkin.
\newblock Triangle-free $2P_3$-free graphs are $4$-colorable.
\newblock{\em Discrete Mathematics}, 313:715--720, 2013.

\bibitem{RS-survey}
I.~Schiermeyer and B.~Randerath.
\newblock Polynomial $\chi$-binding functions and forbidden induced subgraphs: A survey.
\newblock{\em Graphs and Combinatorics}, 35:1--31, 2019.

\bibitem{Seinsche}
D.~Seinsche.
\newblock On a property of the class of $n$-colorable graphs.
\newblock{\em Journal of Combinatorial Theory, Series B}, 16:191--193, 1974.

\bibitem{ScottSey-Survey}
A.~Scott and P.~Seymour.
\newblock A survey of $\chi$-boundedness.
\newblock{\em Journal of Graph Theory}, 95:473--504, 2020.

\bibitem{West} D.\,B.\,West. \newblock{\em Introduction to Graph Theory.}
\newblock 2nd Edition, Prentice-Hall, Englewood
Cliffs, New Jersey, 2000.

\end{thebibliography}
\end{document}